\documentclass[preprint,11pt]{elsarticle}     

\usepackage{amstext,amsfonts,amsmath,amssymb,color}
\usepackage{amsthm}
\usepackage{graphicx}

\usepackage{enumitem}

\makeatletter
\def\@author#1{\g@addto@macro\elsauthors{\normalsize%
\def\baselinestretch{1}%
\upshape\authorsep#1\unskip\textsuperscript{%
\ifx\@fnmark\@empty\else\unskip\sep\@fnmark\let\sep=,\fi
\ifx\@corref\@empty\else\unskip\sep\@corref\let\sep=,\fi}%
\def\authorsep{\unskip,\space}%
\global\let\@fnmark\@empty
\global\let\@corref\@empty  
\global\let\sep\@empty}%
\@eadauthor={#1}
}
\makeatother

\DeclareMathOperator{\diag}{diag}
\DeclareMathOperator{\mul}{mult}

\newtheorem{theorem}{Theorem}[section]

\newtheorem{corollary}[theorem]{Corollary}
\newtheorem{lemma}[theorem]{Lemma}

\theoremstyle{definition}

\newtheorem{remark}[theorem]{Remark}

\textwidth=17cm
\textheight=21cm
\topmargin=-1cm
\setlength\oddsidemargin{\dimexpr(\paperwidth-\textwidth)/2 - 1in\relax}
\setlength\evensidemargin{\oddsidemargin}

\journal{the journal.}

\begin{document}

\begin{frontmatter}

\title{\bf Determining some graph joins  by the signless Laplacian spectrum}

\author{Jiachang Ye}
\ead{yejiachang12@163.com}
\address{School of Mathematical Sciences, Xiamen University,  Xiamen, 361005, China}

\author{Jianguo Qian}
\ead{jgqian@xmu.edu.cn}

\address{School of Mathematical Sciences, Xiamen University,  Xiamen, 361005, China\\ School of Mathematics and Statistics, Qinghai Minzu University,  Xining, 810007, China}

\author{Zoran Stani\' c\,*\footnote{* Corresponding author.}}
\ead{zstanic@matf.bg.ac.rs}
\address{Faculty of Mathematics, University of Belgrade,
Studentski trg 16, 11 000 Belgrade, Serbia}

\begin{abstract} A graph  is determined by its signless Laplacian spectrum  if there is no other non-isomorphic graph sharing the same signless Laplacian spectrum. Let  $C_l$, $P_l$, $K_l$ and $K_{s,l-s}$  be  the cycle, the path, the complete graph and the complete bipartite graph with $l$ vertices, respectively. We prove that  $$G\cong K_1\vee (C_{l_1}\cup C_{l_2}\cup\cdots \cup C_{l_t}\cup sK_1),$$ with $s\ge 0, t\ge 1, n\geq 22$, is determined by the signless Laplacian spectrum if and only if either $s=0$ or $s\ge 1$ and $l_i\ne 3$ holds for all $1\leq i\leq t$, where $n$ is the order of $G$, and  $\cup$ and $\vee$ stand for the disjoint union and the join of two graphs, respectively. Moreover, for $s\ge 1$ and  $l_t=3$, $K_1\vee (K_{1,3}\cup C_{l_1}\cup C_{l_2}\cup\cdots \cup C_{l_{t-1}}\cup (s-1)K_1)$ is fixed as a graph sharing the signless Laplacian spectrum with  $G$. This contribution extends some recently published results.
\end{abstract}

\begin{keyword}
$Q$-spectrum\sep $Q$-cospectral graphs\sep  Join\sep Cycle\sep Vertex degree

\MSC[2020] 05C50
\end{keyword}

\end{frontmatter}

\section{Introduction}

Throughout this paper, $G=(V,E)$ is an undirected, simple graph without loops or multiple edges. The number of vertices is called the order of $G$ and usually denoted by $n$.  For the graphs $G$ and $H$, we denote their \textit{disjoint union} by $G\cup H$. In particular, a disjoint union of $s$ copies of $G$ is denoted by $sG$. Similarly, $G\vee H$ denotes their \textit{join}, i.e., the graph obtained by inserting an edge between every vertex of $G$ and every vertex of~$H$.

The  {\em signless Laplacian matrix} of $G$ is
$Q(G)=D(G)+A(G)$, where $D(G)$ and $A(G)$ are the diagonal matrix of vertex degrees  and
the standard adjacency matrix of $G$, respectively. The \textit{signless Laplacian eigenvalues} of $G$ are the eigenvalues of $Q(G)$, and they form the \textit{signless Laplacian spectrum} of $G$; we abbreviate them to $Q$-eigenvalues and $Q$-spectrum. Two non-isomorphic graphs are \textit{$Q$-cospectral} if they share the same $Q$-spectrum. Conversely, a graph $G$ is said to be \textit{determined by the signless Laplacian spectrum} (for short, \textit{$G$ is \textit{DQS}}) if there is no other non-isomorphic graph with the same $Q$-spectrum.

Identifying graphs that are, or are not, determined by the spectrum of a prescribed graph matrix
is one of the oldest and the most extensively studied problems in the entire spectral theory. One of the pioneering results on this topic, motivated by applications in chemistry,  was reported by G\"{u}nthard and Primas in middle 1950s~\cite{GP}. Since then, this problem has been extensively studied  by many scholars. Many results can be found in \cite{towI,towII,CSS,3,4,Liu2,WangWei,Ye2024two,9} and references therein. The previous list is focused on references that emphasize the results concerning the signless Laplacian. A nice survey and a discussion on cospectrality can be found in the first three references.

We write $C_n$, $P_n$, $K_n$ and  $K_{s, n-s}$ to denote the cycle, the path, the complete graph and the complete bipartite graph of order $n$, respectively. The main result of this paper reads as follows.

\begin{theorem}\label{11t} The graph $$G\cong K_1\vee (C_{l_1}\cup C_{l_2}\cup\cdots \cup C_{l_t}\cup sK_1),$$ with $s\ge 0, t\ge 1$ and at least $22$ vertices is DQS if and only if either $s=0$ or $s\ge 1$ and $l_i\ne 3~ (1\leq i\leq t)$.
	
	Moreover, for $s\ge 1$ and  $l_t=3$, $G$ is $Q$-cospectral with $K_1\vee (K_{1,3}\cup C_{l_1}\cup C_{l_2}\cup\cdots \cup C_{l_{t-1}}\cup (s-1)K_1)$.
\end{theorem}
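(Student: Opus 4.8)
The plan is to treat the two directions separately, starting with the easier \emph{only if} (necessity) part, which is exactly the ``Moreover'' clause. The key local fact is that $C_3\cup K_1$ and $K_{1,3}$ are $Q$-cospectral, both having $Q$-spectrum $\{4,1,1,0\}$; moreover a direct computation shows that they share the same \emph{main angles}, i.e.\ the squared projections of the all-ones vector onto the eigenspaces are $3$ for the eigenvalue $4$, $1$ for the eigenvalue $0$, and $0$ for the eigenvalue $1$ in both graphs. Writing $Q(K_1\vee F)$ in the bordered form $\bigl(\begin{smallmatrix} n-1 & \mathbf 1^{T}\\ \mathbf 1 & Q(F)+I\end{smallmatrix}\bigr)$ and applying the Schur-complement formula for its characteristic polynomial, one sees that the cone $K_1\vee F$ sees $F$ only through its $Q$-spectrum together with these main angles. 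Hence replacing a $C_3\cup K_1$ part of $F$ by $K_{1,3}$ leaves $\phi_Q(K_1\vee F,x)$ unchanged, so for $s\ge 1$ and $l_t=3$ the graph $K_1\vee(K_{1,3}\cup C_{l_1}\cup\cdots\cup C_{l_{t-1}}\cup (s-1)K_1)$ is $Q$-cospectral with $G$. It is non-isomorphic to $G$ (their degree sequences already differ: $G$ has $s$ vertices of degree $1$, the mate only $s-1$), so $G$ is not DQS, which establishes necessity.

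For sufficiency, let $H$ be any graph $Q$-cospectral with $G$. From $\operatorname{tr}(Q^{k})$, $k=0,1,2$, I first record $|V(H)|=n$, $m(H)=m(G)=n-1+c$, and $\sum_v d_H(v)^2=(n-1)^2+(n-1)+8c$, where $c=\sum_i l_i$ and $s=n-1-c$. The first real task is to locate a universal vertex in $H$. Cauchy interlacing applied to the $2\times2$ principal submatrix on the two highest-degree vertices gives $q_2(H)\ge d_2(H)-1$; since $Q(F)+I$ is the principal submatrix of $Q(G)$ obtained by deleting the apex, interlacing also yields $q_2(G)\le q_1(F)+1=5$, whence $d_2(H)\le 6$, so $H$ has at most one vertex of large degree. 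The classical bound $q_1(H)\le\max_{uv\in E}(d_u+d_v)$ then forces this vertex $w$ to be nearly universal: since $q_1(H)=q_1(G)\ge\Delta(G)+1=n>12$ and every edge has degree-sum at most $d_H(w)+6$, we get $d_H(w)\ge n-6$.

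Upgrading ``nearly universal'' to genuinely universal, $d_H(w)=n-1$, is the first delicate point, since the low moments alone leave $1\le n-1-d_H(w)\le 5$ open; here I would combine the narrow location $q_1(G)\in[n,n+2]$ with the finer spectral data to exclude each missing value, and this is where the hypothesis $n\ge22$ begins to be used. Once $w$ is universal I may write $H=K_1\vee F'$ with $F'=H-w$, and the moment identities translate into $\sum_v d_{F'}(v)=2c$, $\sum_v d_{F'}(v)^2=4c$, i.e.\ the balance condition $\sum_v d_{F'}(v)\bigl(d_{F'}(v)-2\bigr)=0$. Interlacing on the submatrix $Q(F')+I\subset Q(H)$ gives $q_2(F')\le4$, so at most one component of $F'$ has $q_1>4$; recalling (via the line-graph correspondence and Whitney's theorem) that the connected graphs with $q_1\le4$ are exactly the paths, cycles and $K_{1,3}$, the balance condition kills every path on $\ge2$ vertices and severely constrains the rest.

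The main obstacle is precisely this classification: I must rule out the surviving ``bad'' possibilities (a spider, a path attached to a cycle, or a single anomalous component with $q_1>4$), for which the low moments do \emph{not} suffice, and I must then pin down the cycle lengths. The plan is to match the \emph{entire} spectrum of $K_1\vee F'$ against the very rigid spectrum of $G$ --- the eigenvalue $1$ of multiplicity $s-1$, the eigenvalue $5$ of multiplicity $t-1$, and the cycle values $3+2\cos(2\pi j/l_i)$ --- again invoking $n\ge22$ to discard small sporadic coincidences. The conclusion will be that $F'$ is a disjoint union of cycles, isolated vertices, and some number $r\ge0$ of copies of $K_{1,3}$, each $K_{1,3}$ accounting for one triangle and one isolated vertex of $F$, with the remaining cycle lengths forced to coincide with the $l_i$. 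If $s=0$ or all $l_i\ne3$ then necessarily $r=0$, so $F'\cong F$ and $H\cong G$, proving that $G$ is DQS; otherwise $r$ may be positive and we recover exactly the cospectral mate of the first paragraph.
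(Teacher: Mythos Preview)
Your ``moreover'' direction is correct and is conceptually cleaner than the paper's: the paper computes the full $Q$-spectra of both graphs in two separate lemmas and compares, whereas your main-angle argument explains \emph{why} the swap $C_3\cup K_1\leftrightarrow K_{1,3}$ is invisible to the cone.

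For sufficiency, however, your route diverges from the paper's and leaves a genuine gap at exactly the point you flag as ``the main obstacle''. The paper does \emph{not} classify the components of $F'=H-v_1$ via $q_2(F')\le 4$. Instead it (i) imports the stronger bound $d_2(H)\le 4$ (not your $\le 6$) from a cited lemma, (ii) proves $d_1(H)=n-1$ by a delicate case analysis comparing $\mul_H(1)$ with $\mul_G(1)$ (your ``combine the narrow location of $q_1$ with finer data'' is too vague here --- this step occupies a full lemma with several subcases and is where $n\ge 22$ is actually used), and then (iii) uses local structural lemmas (two $4$-vertices are non-adjacent, share only $v_1$, each sees at most one $3$-vertex, and a $Z_6$-subgraph is forbidden) to force every component of $F'$ to be a cycle, a $K_{1,3}$, a $Z_5$, or a $P_1$. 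The kill is then the \emph{third} moment: the triangle identity $\triangledown(G)=\triangledown(H)+n_4^*$ together with the component list gives $\triangledown(H)\ge n-s-1=\triangledown(G)$, forcing $n_4^*=0$ and hence no $K_{1,3}$ or $Z_5$ at all --- independently of whether any $l_i$ equals $3$.

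Your plan, by contrast, stops at $k=0,1,2$ and hopes to exclude the single anomalous $q_1>4$ component and the $K_{1,3}$'s by ``matching the entire spectrum''. This is where it breaks: a $K_{1,3}$ in $F'$ contributes a double eigenvalue $2$ to $S_Q(H)$, but $2=3+2\cos(2\pi/3)$ already appears in $S_Q(G)$ whenever some $l_i$ is divisible by $3$ (e.g.\ $l_i=6,9,\ldots$), so the spectrum alone does not forbid $K_{1,3}$ components in those cases. You need the triangle count (or an equivalent third-moment argument) to close this, and once you have it the anomalous component disappears too --- at which point your $q_2(F')\le 4$ route and the paper's structural route converge.
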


This result can be seen as an extension of one of the results established in~\cite{Ye2024two} concerning the {$Q$-cospectrality} of $K_1\vee (C_l\cup (n-l-1)K_1)$. It also generalizes the results of \cite{20,Liu21} concerning the $Q$-cospectrality of joins of an isolated vertex with sets of vertex disjoint cycles. Finally, Theorem~\ref{11t} is closely related to the results reported in \cite{MH3,Liu11,Liu1,sage,Wang11,Zhou11} concerning similar graph products.

The remainder of the paper is organized as follows. Section~\ref{sec:ir} contains additional terminology and notation, along with some known and some initial results. In Section~\ref{sec3}, we explicitly compute the signless Laplacian spectrum of certain graph joins. The proof of Theorem~\ref{11t} is given in~Section~\ref{sec4}.

\section{Preparatory}\label{sec:ir}

We write $d_{G}(v)$  and $N_{G}(v)$ to denote
the degree of a vertex $v$  and the set of neighbours of the same vertex in a graph $G$, respectively. For a vertex subset $U\subset V$,  $G[U]$  denotes the subgraph induced by $U$. The graphs obtained by deleting vertex $v$ and edge $e$ of $G$ are denoted by $G-v$ and $G-e$, respectively. Since $Q(G)$ is positive semidefinite, its eigenvalues can be arranged
as
$$0\le\kappa_{n}(G)\leq\kappa_{n-1}(G)\leq\cdots\leq\kappa_{1}(G).$$
It is well known that $\kappa_{n}(G)=0$ if and only if $G$ has a bipartite component~\cite[Theorem~1.18]{ifge}. Henceforth, we suppose that the vertex degrees of $G$ are arranged as  $d_1(G)\geq d_2(G)\geq \cdots\geq d_n(G)$; in particular, we suppose that the degree $d_i(G)$ is attained by a vertex $v_i$, for $1\leq i\leq n$.  If there is no risk of confusion, we will suppress $G$ in the previous notation. Besides, we  denote by $n_k(G)$  the number of vertices of degree $k$ (for short, $k$-vertices) in  $V(G)\setminus \{v_1\}$, that is $$n_k(G)=|\{u\,:\, d(u)=k~\text{and}~u\in V(G)\setminus \{v_1\}\}|,$$
where $v_1$ is a vertex attaining the maximum degree.

What's more, let  $\diag(c_{1},c_{2},\ldots,c_{n})$ stand for a diagonal matrix with $c_{1},c_{2},\ldots,c_{n}$ on the main diagonal, and let $\theta_{i}(N)$, $1\le i\le n$, denote the $i$th largest eigenvalue of an $n\times n$ real symmetric matrix~$N$.

We proceed with some known results.

\begin{lemma}\label{21l} {\rm\cite{Liu1}}  Let  $U=\{u_{1},u_{2},\ldots,u_s\}$ be a vertex subset of a graph $G$ and $H\cong G[U]$. If  $0\leq c_{j}\leq  d_G(u_{j}),$ $1\leq j\leq s$, then
	$\kappa_{i}(G)\geq \theta_{i} \big(\diag(c_{1},c_{2},\ldots,c_{s})+A(H)\big)$ holds for $1\leq i\leq s$.
\end{lemma}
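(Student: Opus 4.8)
The plan is to realize the test matrix $\diag(c_1,\ldots,c_s)+A(H)$ as a positive semidefinite perturbation of a genuine principal submatrix of $Q(G)$, and then to chain together two classical eigenvalue inequalities: Cauchy interlacing for principal submatrices, and Weyl's monotonicity under positive semidefinite perturbations. First I would restrict $Q(G)$ to the rows and columns indexed by $U=\{u_1,\ldots,u_s\}$. Since $Q(G)=D(G)+A(G)$, and the block of $A(G)$ on $U\times U$ records exactly the edges inside $U$, and since $H\cong G[U]$ is the \emph{induced} subgraph, this principal submatrix equals
$$B:=\diag\bigl(d_G(u_1),\ldots,d_G(u_s)\bigr)+A(H).$$
The one point to watch here is that the diagonal of $B$ carries the degrees $d_G(u_j)$ computed in the ambient graph $G$ rather than in $H$, which is precisely why the hypothesis is phrased as $0\le c_j\le d_G(u_j)$.

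Next I would invoke the Cauchy (Poincar\'e) interlacing theorem. Because $B$ is an $s\times s$ principal submatrix of the $n\times n$ real symmetric matrix $Q(G)$, its eigenvalues interlace those of $Q(G)$, and in particular $\theta_i(Q(G))\ge\theta_i(B)$ for $1\le i\le s$. With the ordering convention of the paper, $\theta_i(Q(G))=\kappa_i(G)$, so this step already delivers $\kappa_i(G)\ge\theta_i(B)$ for $1\le i\le s$.

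It then remains to compare $B$ with $\diag(c_1,\ldots,c_s)+A(H)$. Since $c_j\le d_G(u_j)$ for every $j$, the difference
$$B-\bigl(\diag(c_1,\ldots,c_s)+A(H)\bigr)=\diag\bigl(d_G(u_1)-c_1,\ldots,d_G(u_s)-c_s\bigr)$$
is a diagonal matrix with nonnegative entries, hence positive semidefinite. By Weyl's monotonicity theorem, adding a positive semidefinite matrix cannot decrease any eigenvalue, so $\theta_i(B)\ge\theta_i\bigl(\diag(c_1,\ldots,c_s)+A(H)\bigr)$ for all $i$. Concatenating the two chains of inequalities yields the claim.

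The argument is short and each step is standard; the only real difficulty is bookkeeping on the directions and the identification of the block. I must be certain that the restricted adjacency block is genuinely $A(H)$, which relies on $H$ being induced, and I must confirm that both inequalities point the same way — interlacing makes the $i$th eigenvalue of the larger matrix dominate the $i$th eigenvalue of the submatrix, and the positive semidefinite perturbation pushes eigenvalues upward rather than downward. Since the two effects reinforce one another, the desired lower bound on $\kappa_i(G)$ follows.
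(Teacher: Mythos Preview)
Your proof is correct: the principal submatrix of $Q(G)$ on $U$ is exactly $\diag(d_G(u_1),\ldots,d_G(u_s))+A(H)$, Cauchy interlacing gives $\kappa_i(G)\ge\theta_i(B)$, and Weyl's monotonicity for the positive semidefinite diagonal perturbation $\diag(d_G(u_j)-c_j)$ finishes the chain. The paper itself does not prove this lemma --- it is quoted from~\cite{Liu1} without argument --- so there is no in-paper proof to compare against; your argument is the standard one and matches how the lemma is applied later (e.g., via the matrices $Q^*(F_i)$).
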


\begin{lemma}\label{26l} {\rm\cite{Heu1}} If $G$ is a graph with $n~(n\geq 3)$ vertices and $e$ is an edge of $G$, then
	$\kappa_{1}(G)\ge \kappa_{1}(G-e)\ge \kappa_2(G)\ge \kappa_2(G-e)\cdots\ge \kappa_{n}(G)\ge \kappa_{n}(G-e)\ge0$.
\end{lemma}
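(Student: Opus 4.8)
The plan is to recognise edge deletion as a rank-one, positive semidefinite perturbation of the signless Laplacian and then to read off the alternating chain from the Courant--Fischer min-max principle. First I would record the exact effect of removing the edge $e=uv$. Writing $\mathbf{e}_{w}$ for the standard basis vector indexed by a vertex $w$, deletion of $e$ lowers both $d(u)$ and $d(v)$ by one (a change of $\mathbf{e}_{u}\mathbf{e}_{u}^{\top}+\mathbf{e}_{v}\mathbf{e}_{v}^{\top}$ on the diagonal) and zeroes the two symmetric adjacency entries in positions $(u,v)$ and $(v,u)$ (a change of $\mathbf{e}_{u}\mathbf{e}_{v}^{\top}+\mathbf{e}_{v}\mathbf{e}_{u}^{\top}$). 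Adding these contributions yields
\begin{equation*}
Q(G)-Q(G-e)=\mathbf{e}_{u}\mathbf{e}_{u}^{\top}+\mathbf{e}_{v}\mathbf{e}_{v}^{\top}+\mathbf{e}_{u}\mathbf{e}_{v}^{\top}+\mathbf{e}_{v}\mathbf{e}_{u}^{\top}=xx^{\top},\qquad x:=\mathbf{e}_{u}+\mathbf{e}_{v}.
\end{equation*}
Thus $Q(G)=Q(G-e)+xx^{\top}$ is obtained from $Q(G-e)$ by adding a positive semidefinite matrix of rank one, and the lemma becomes a purely spectral statement about such a perturbation.

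Next, for each index $i$ I would prove the two one-step inequalities
\begin{equation*}
\kappa_{i}(G)\ \ge\ \kappa_{i}(G-e)\ \ge\ \kappa_{i+1}(G),
\end{equation*}
which, concatenated over $i=1,2,\dots$, produce the whole alternating string in the statement. Recall the notation $\theta_{i}$ for the $i$th largest eigenvalue of a symmetric matrix, so that $\kappa_{i}(G)=\theta_{i}(Q(G))$, and recall the Courant--Fischer characterisation
\begin{equation*}
\theta_{i}(M)=\max_{\dim S=i}\ \min_{0\neq z\in S}\frac{z^{\top}Mz}{z^{\top}z}=\min_{\dim S=n-i+1}\ \max_{0\neq z\in S}\frac{z^{\top}Mz}{z^{\top}z}.
\end{equation*}
The left inequality is immediate from monotonicity: for every $z$ we have $z^{\top}Q(G)z=z^{\top}Q(G-e)z+(x^{\top}z)^{2}\ge z^{\top}Q(G-e)z$, and taking the max-min over $i$-dimensional subspaces gives $\kappa_{i}(G)\ge\kappa_{i}(G-e)$.

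For the right inequality I would exploit the rank-one nature of the perturbation. Choose an $(n-i+1)$-dimensional subspace $S^{\ast}$ attaining $\theta_{i}(Q(G-e))$ in the second (min-max) form above, and intersect it with the hyperplane $x^{\perp}$. The dimension count $\dim(S^{\ast}\cap x^{\perp})\ge(n-i+1)+(n-1)-n=n-i$ guarantees an $(n-i)$-dimensional subspace $S'\subseteq S^{\ast}\cap x^{\perp}$. On $x^{\perp}$ the two quadratic forms coincide, since $x^{\top}z=0$ kills the correction term; feeding $S'$ into the min-max form of $\theta_{i+1}(Q(G))$ then gives
\begin{equation*}
\kappa_{i+1}(G)\le\max_{0\neq z\in S'}\frac{z^{\top}Q(G)z}{z^{\top}z}=\max_{0\neq z\in S'}\frac{z^{\top}Q(G-e)z}{z^{\top}z}\le\max_{0\neq z\in S^{\ast}}\frac{z^{\top}Q(G-e)z}{z^{\top}z}=\kappa_{i}(G-e).
\end{equation*}
Finally, the terminal bound $\kappa_{n}(G-e)\ge0$ is just the positive semidefiniteness of $Q(G-e)$ noted in the preliminaries.

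I do not anticipate a genuine obstacle, as this is a textbook consequence of rank-one perturbation theory; the only point demanding care is precisely the dimension count $\dim(S^{\ast}\cap x^{\perp})\ge n-i$, which is where the rank-one hypothesis enters and which forces single-step (rather than larger) interlacing. As an equivalent and even shorter route I would mention invoking Weyl's inequalities $\theta_{i+j-1}(A+B)\le\theta_{i}(A)+\theta_{j}(B)$ and $\theta_{i+j-n}(A+B)\ge\theta_{i}(A)+\theta_{j}(B)$ with $A=Q(G-e)$ and $B=xx^{\top}$: since $B$ has the single nonzero eigenvalue $\lVert x\rVert^{2}=2$ and all others equal to $0$, taking $j=n$ recovers $\kappa_{i}(G)\ge\kappa_{i}(G-e)$ and taking $j=2$ recovers $\kappa_{i+1}(G)\le\kappa_{i}(G-e)$, again reducing the entire statement to the rank of the perturbation.
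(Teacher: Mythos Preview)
Your argument is correct. The key observation that $Q(G)-Q(G-e)=(\mathbf{e}_u+\mathbf{e}_v)(\mathbf{e}_u+\mathbf{e}_v)^{\top}$ is exactly right, and from there both the Courant--Fischer argument and the Weyl-inequality shortcut you offer are valid and standard; the dimension count $\dim(S^{\ast}\cap x^{\perp})\ge n-i$ is handled cleanly.

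There is nothing to compare against in the paper itself: Lemma~\ref{26l} is simply quoted from \cite{Heu1} without proof. Your write-up is precisely the textbook justification one would expect for this interlacing result, so it is entirely appropriate as a self-contained proof.
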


\begin{lemma}\label{l2.10} {\rm\cite{Liu21}} If $G$ is a graph with $n\geq 12$ vertices such that  $0<\kappa_{n}\leq \kappa_{2}\leq 5<n<\kappa_{1}$, then $G$ is connected with $d_{2}\leq4$ and $d_{1}\geq n-3$.
\end{lemma}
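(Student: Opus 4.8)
The plan is to read off the three conclusions in the order $d_2\le 4\Rightarrow d_1\ge n-3\Rightarrow$ connectedness, using the interlacing estimate of Lemma~\ref{21l} to control the degrees and the criterion $\kappa_n=0\Leftrightarrow G$ has a bipartite component to control the topology. Throughout I would keep in play the classical upper bound $\kappa_1(G)\le\max_{uv\in E}\{d_u+d_v\}\le d_1+d_2$, which combined with the hypothesis $\kappa_1>n$ gives the basic inequality $d_1+d_2>n\ge 12$.

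First, the bound on $d_2$. Applying Lemma~\ref{21l} to $U=\{v_1,v_2\}$ with $c_j=d_j$ yields $\kappa_2(G)\ge\theta_2(M)$, where $M=\diag(d_1,d_2)+A(G[U])$ is $2\times 2$. If $v_1v_2\notin E$ then $\theta_2(M)=d_2$, while if $v_1v_2\in E$ then $\theta_2(M)=\tfrac12\bigl(d_1+d_2-\sqrt{(d_1-d_2)^2+4}\bigr)$, and a short computation shows that $\theta_2(M)>5$ is equivalent to $(d_1-5)(d_2-5)>1$. Assume $d_2\ge 6$. In the non-adjacent case $\theta_2(M)=d_2\ge 6>5$ already contradicts $\kappa_2\le 5$; in the adjacent case $d_1+d_2>n\ge 12$ forces $d_1\ge 7$, so $(d_1-5)(d_2-5)\ge 2>1$ and again $\kappa_2>5$, a contradiction. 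Hence $d_2\le 5$.

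The delicate point, and the step I expect to be the main obstacle, is excluding the boundary value $d_2=5$. Here the $2\times 2$ interlacing is inconclusive: for $d_2=5$ one gets $\theta_2(M)=5$ when $v_1v_2\notin E$ (only forcing $\kappa_2=5$, which the non-strict hypothesis permits) and $\theta_2(M)<5$ when $v_1v_2\in E$, so no purely local estimate can rule it out. To break this I would exploit the large gap $\kappa_1>n\ge 12$ against $\kappa_2\le 5$ globally. Concretely, I would pass to the principal submatrix $M'$ of $Q(G)$ obtained by deleting the row and column of $v_1$; writing $M'=Q(G-v_1)+D'$ with $D'=\diag(\mathbf{1}[v\sim v_1])\succeq 0$, Cauchy interlacing together with Weyl's inequality gives $\kappa_2(G-v_1)\le\theta_2(M')\le\kappa_2(G)\le 5$. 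I would then track the persistence of a degree-$5$ vertex inside $G-v_1$ (it retains degree at least $4$, and exactly $5$ if it missed $v_1$) and contradict the surviving eigenvalue constraints; this is precisely where $n\ge 12$ is consumed and where the original argument does its real work.

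Once $d_2\le 4$ is secured the rest is short. From $\kappa_1\le d_1+d_2\le d_1+4$ and $\kappa_1>n$ we obtain $d_1>n-4$, i.e. $d_1\ge n-3$. Finally, for connectedness, recall that $\kappa_n>0$ forces $G$ to have no bipartite component. The vertex $v_1$ has at least $n-3$ neighbours, all lying in its own component, so that component has at least $n-2$ vertices; the at most two remaining vertices then form components of order at most $2$, each of which is bipartite, whence $\kappa_n=0$, contradicting $\kappa_n>0$. Therefore $G$ is connected, which completes the argument.
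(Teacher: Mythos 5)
The paper offers no proof of this lemma --- it is quoted from \cite{Liu21} --- so your proposal must stand on its own, and it does not: the step you yourself flag as the main obstacle, excluding $d_2=5$, is left as a sketch, and the sketch points in a direction that cannot work. Your surrounding steps are correct: the $2\times2$ application of Lemma~\ref{21l} does rule out $d_2\ge 6$ (the equivalence $\theta_2(M)>5\Leftrightarrow(d_1-5)(d_2-5)>1$ is legitimate since $d_1+d_2\ge\kappa_1>n\ge12>10$, and $d_1+d_2\ge13$ with $d_1\ge d_2\ge 6$ forces $d_1\ge7$); the deduction $d_1\ge n-3$ from $\kappa_1\le d_1+d_2\le d_1+4$ is fine once $d_2\le4$ is known; and the connectedness argument (every component missing $v_1$ has at most two vertices, hence is bipartite, forcing $\kappa_n=0$) is correct.

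The problem is the proposed mechanism for killing $d_2=5$. The chain $\kappa_2(G-v_1)\le\theta_2(M')\le\kappa_2(G)\le5$ only yields \emph{upper} bounds on the spectrum of $G-v_1$; to contradict anything you would need a configuration forcing $\kappa_2(G-v_1)>5$, but after deleting $v_1$ every vertex has degree at most $d_2=5$, and a single surviving vertex of degree $4$ or $5$ raises only $\kappa_1(G-v_1)$, not $\kappa_2$, so no contradiction is available along this route. Nor do the natural local enlargements close the gap: if $d_2=5$ then $\kappa_1\le d_1+5$ gives $d_1\ge n-4$, so $v_2$ has a common neighbour $w$ with $v_1$; when $v_1v_2\notin E$ the weighted matrix $\diag(d_1,5,2)+A(G[\{v_1,v_2,w\}])$ indeed has $\theta_2>5$ (for $d_1=8$ its roots are approximately $1.7$, $5.4$, $8.2$), but when $v_1v_2\in E$ the analogous matrix has $\theta_2<5$ (for $d_1=8$, $c_w=2$ the roots are approximately $1.7$, $4.85$, $8.5$, and larger $c_w$ or $d_1$ does not help), so the case of a $5$-vertex adjacent to $v_1$ cannot be eliminated by these small principal submatrices at all. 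That case requires a genuinely different, more global structural analysis --- this is what the proof in \cite{Liu21} actually supplies --- and without it the conclusion $d_2\le4$, on which Lemmas~\ref{23l} and~\ref{2.6l} and Corollary~\ref{31cl} all rest, is not established by your argument.
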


\begin{lemma}\label{23l} {\rm\cite{Ye2024two}} Let $G$ be a connected  graph with $n$ vertices such that  $d_2\leq 4.$ If either $d_n=1<11\le d_1$ or $2\le d_n<8\le d_1$, then $d_1\ge \kappa_1-3$.
\end{lemma}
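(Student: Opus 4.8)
The plan is to prove the equivalent upper bound $\kappa_1\le d_1+3$. Both hypotheses force $d_1\ge 8$, and for any connected graph with at least one edge one has $\kappa_1\ge d_1+1$ (delete every edge of $G$ outside a maximum star $K_{1,d_1}$ centred at $v_1$ and apply Lemma~\ref{26l} repeatedly, noting $\kappa_1(K_{1,d_1}\cup\,\text{isolated vertices})=d_1+1$); hence $\kappa_1\le d_1+3$ is exactly the desired $d_1\ge\kappa_1-3$. The engine is the incidence factorization $Q(G)=BB^{\mathsf T}$, where $B$ is the unsigned vertex--edge incidence matrix. Since $BB^{\mathsf T}$ and $B^{\mathsf T}B=2I+A(L(G))$ share their nonzero eigenvalues, we get $\kappa_1(G)=2+\mu$ with $\mu:=\theta_1\!\big(A(L(G))\big)$ the adjacency spectral radius of the line graph $L(G)$. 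Passing to $L(G)$ is the whole point: the $d_1$ edges incident to $v_1$ always form a clique in $L(G)$, however $N_G(v_1)$ happens to be joined up inside $G$, and it is this forced clique structure that shaves off the final unit.

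First I would record why naive estimates are insufficient. Gershgorin, or the line-graph degree bound $\mu\le\Delta(L(G))=\max_{uv\in E}\{d(u)+d(v)\}-2$, yields only $\kappa_1\le\max_{uv\in E}\{d(u)+d(v)\}\le d_1+d_2\le d_1+4$, which overshoots by exactly one. To improve it I would work with the (positive) Perron eigenvector $z$ of $A(L(G))$, normalized so that $\max z=1$ is attained at an edge $e^\ast$. If $e^\ast=ab$ were not incident to $v_1$, both endpoints would satisfy $d(a),d(b)\le d_2\le 4$, so using $z_{e^\ast}=1$ and $z_f\le 1$ we would get $\mu=\sum_{f\sim e^\ast}z_f\le\deg_{L(G)}(e^\ast)=d(a)+d(b)-2\le 6$; but $\mu=\kappa_1-2\ge d_1-1\ge 7$, a contradiction. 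Hence $e^\ast=v_1w^\ast$ with $d(w^\ast)\le 4$.

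Next I would expand the eigenvalue equation at $e^\ast$. Its neighbours in $L(G)$ split into the $d_1-1$ other edges through $v_1$ (each with $z_f\le 1$) and the at most $d(w^\ast)-1\le 3$ edges $w^\ast w'$ with $w'\ne v_1$. For the latter, each $f=w^\ast w'$ has $\deg_{L(G)}(f)=d(w^\ast)+d(w')-2\le 6$, whence $z_f\le\deg_{L(G)}(f)/\mu\le 6/\mu$. Combining gives $\mu\le(d_1-1)+18/\mu$. Assuming for contradiction that $\mu>d_1+1$, this forces $2<18/\mu$, i.e.\ $\mu<9$; but the assumption also gives $\mu>d_1+1\ge 9$, a contradiction. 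Therefore $\mu\le d_1+1$ and $\kappa_1=2+\mu\le d_1+3$, as required. When $d_n=1$ a maximum-degree vertex may have pendant neighbours, and a pendant edge $v_1w^\ast$ contributes no second-neighbourhood terms at all, so the estimate only improves; the larger threshold $d_1\ge 11$ there leaves ample slack.

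The main obstacle is conceptual rather than computational: every elementary bound applied to $Q(G)$ directly — Gershgorin, $\max_{uv\in E}\{d(u)+d(v)\}$, or the Perron argument run on $Q$ instead of on $L(G)$ — stalls at $d_1+4$, precisely because in $G$ the neighbours of $v_1$ need not be mutually adjacent and so cannot be treated as a clique. The decisive move is the transfer to the line graph, where the edges at $v_1$ automatically form $K_{d_1}$; this replaces a diffuse estimate by the sharp count $d_1-1$, which, together with the $O(1/\mu)$ control of the at most three second-neighbourhood entries, removes the stubborn extra unit. The only delicate points are the strict inequalities — that $\mu>6$ forces $e^\ast$ to sit at $v_1$, and that $\mu\ge 9$ closes the final contradiction — and it is exactly here that the hypotheses $d_1\ge 8$ (respectively $d_1\ge 11$) are consumed.
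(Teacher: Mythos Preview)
The paper does not supply a proof of this lemma; it is quoted from \cite{Ye2024two}, so there is no in-paper argument to compare against. Your proof is correct. The transfer to the line graph via $Q(G)=BB^{\mathsf T}$ and $B^{\mathsf T}B=2I+A(L(G))$, followed by the two-step Perron-vector localization (first forcing the maximizing edge $e^\ast$ to sit at $v_1$ because $\mu=\kappa_1-2\ge d_1-1>6$, then bounding each of the at most three side-edges $w^\ast w'$ by $6/\mu$), gives $\mu^2-(d_1-1)\mu-18\le 0$ and hence $\mu\le d_1+1$ exactly when $d_1\ge 8$. Your argument in fact never uses the value of $d_n$ and works uniformly under the hypotheses $d_2\le 4$ and $d_1\ge 8$; the case split on $d_n$ and the higher threshold $d_1\ge 11$ in the pendant case appear to be artefacts of whatever method is used in \cite{Ye2024two}, not of the inequality itself.
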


Let  $\triangledown(G)$ denote the number of triangles in graph $G$.

\begin{lemma}\label{24l} {\rm\cite{18}} If  $G$ contains $\triangledown(G)$ triangles, $n$ vertices and  $m$ edges,  then  $$ \sum_{i=1}^{n}\kappa^{3}_{i}=3\sum_{i=1}^{n}d^{2}_{i}+\sum_{i=1}^{n}d^{3}_{i}+6\triangledown(G), \hspace{5pt}\sum_{i=1}^{n}\kappa^{2}_{i}=2m+\sum_{i=1}^{n}d^{2}_{i},\hspace{5pt}\text{and}\hspace{5pt}
	\sum_{i=1}^{n}\kappa_{i}=\sum_{i=1}^{n}d_{i}=2m.$$
\end{lemma}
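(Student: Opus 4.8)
The plan is to recognize the three identities as the first three spectral moments of the signless Laplacian and to evaluate each as the trace of the corresponding power of $Q(G)=D(G)+A(G)$. Since $Q$ is real symmetric, it is orthogonally diagonalizable, so its eigenvalues $\kappa_1,\ldots,\kappa_n$ satisfy $\sum_{i=1}^{n}\kappa_i^{k}=\operatorname{tr}(Q^{k})$ for every $k\ge 1$. It therefore suffices to expand $(D+A)^{k}$ for $k=1,2,3$ and collect the diagonal contributions, using three elementary facts: $A$ has zero diagonal (the graph is simple and loopless), $(A^{2})_{ii}=d_i$ equals the degree of $v_i$, and the diagonal entries of $A^{3}$ record triangle counts.

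For the first identity I would simply observe that $Q_{ii}=d_i+A_{ii}=d_i$, whence $\operatorname{tr}(Q)=\sum_i d_i=2m$ by the handshake lemma. For the second, expanding $Q^{2}=D^{2}+DA+AD+A^{2}$ and taking the trace, the two cross terms vanish because $(DA)_{ii}=d_i A_{ii}=0$, while $\operatorname{tr}(D^{2})=\sum_i d_i^{2}$ and $\operatorname{tr}(A^{2})=\sum_i(A^{2})_{ii}=\sum_i d_i=2m$. This yields $\sum_i\kappa_i^{2}=2m+\sum_i d_i^{2}$.

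For the third identity I would expand $Q^{3}=(D+A)^{3}$ into its eight ordered terms and trace each. The pure term $D^{3}$ gives $\operatorname{tr}(D^{3})=\sum_i d_i^{3}$. The four terms containing exactly one factor of $A$ all have vanishing trace, since each leaves $A_{ii}=0$ on the diagonal. Each of the three terms with exactly two factors of $A$ contributes $\sum_i d_i^{2}$: for $DA^{2}$ and $A^{2}D$ this is immediate from $(A^{2})_{ii}=d_i$, while for $ADA$ one computes $\operatorname{tr}(ADA)=\sum_i\sum_{j\sim i}d_j=\sum_j d_j^{2}$, reindexing by counting each vertex $j$ once for each of its $d_j$ neighbours. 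Finally $\operatorname{tr}(A^{3})=\sum_i(A^{3})_{ii}=2\sum_i t_i=6\triangledown(G)$, where $t_i$ is the number of triangles through $v_i$ and $\sum_i t_i=3\triangledown(G)$ because each triangle is counted at its three vertices. Summing gives $\sum_i\kappa_i^{3}=\sum_i d_i^{3}+3\sum_i d_i^{2}+6\triangledown(G)$.

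The argument is essentially mechanical once the trace-power identity is in place; the only points requiring genuine care are the bookkeeping of the $ADA$ term, whose evaluation rests on the double-counting observation that summing $d_j$ over all ordered incident pairs $(i,j)$ with $i\sim j$ weights each vertex by its own degree, and the factor of two in the triangle count, coming from the two orientations of each closed walk of length three. Beyond these two checks, every remaining term is dispatched by the vanishing diagonal of $A$.
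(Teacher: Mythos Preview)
Your proof is correct; the paper does not prove this lemma at all but simply cites it from Cvetkovi\'c--Rowlinson--Simi\'c, so there is no approach to compare against. One small slip: in the cubic expansion there are \emph{three} ordered words with exactly one factor of $A$ (namely $D^{2}A$, $DAD$, $AD^{2}$), not four, but since each of them has vanishing trace for the reason you give, the final identity is unaffected.
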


We next  deal with vertex-weighted graphs illustrated in Figure \ref{fig1}. They are `vertex-weighted' in the sense that some of their vertices are accompanied with weights. Precisely, vertices $w_1$ and $w_2$ have weight $3$, and weight of $v_1$ (labeled as $d(v_1)$ simply) is as in the figure. The graph $F_i$ can be seen as an induced subgraph of a larger graph, say $F'_i$, in which degrees of the previously mentioned vertices are equal to the corresponding weights. It is not necessary to know the exact distribution of other edges, but we care about vertex degrees, which is why we assign weights to them. To avoid unnecessary complicating, we use the same notation for vertex degrees and vertex weights. It follows that the signless Laplacian matrix representation $F_i$, say $Q^{*}(F_i)$, is the principal submatrix of the signless Laplacian $Q(F'_i)$.

\begin{remark} In the following statements, we consider a graph $G$ containing some of graphs $F'_i, 1\leq i\leq 4$, as subgraphs, and in each situation we estimate $\kappa_2(G)$ as demonstrated in this remark. Take, for example, that a graph $G$ contains $F'_2$ as a subgraph. Note that $w_1$ has weight $3$ in $F_2$, which implies the existence of a vertex, say $z$, such that $z\in V(F'_2)\setminus V(F_2)$ and $zw_1\in E(F'_2)$. We have the same situation for $w_2$. Since
	$$Q^*(F_2)=\begin{pmatrix} d_{F'_2}(v_1)& 1& 1& 1&1 \\ 1& 4& 1&1&1\\1& 1&3&0&0\\1&1&0&3&0\\1&1&0&0&2 \end{pmatrix}$$
	is a principal submatrix of $Q(F'_2)$, by virtue of Lemmas~\ref{21l} and~\ref{26l}, we have $\kappa_2(G)\ge  \theta_2(Q^*(F_2))>5$.
\end{remark}

\begin{figure}
	\centering
	\includegraphics[width=0.90\textwidth,angle=0]{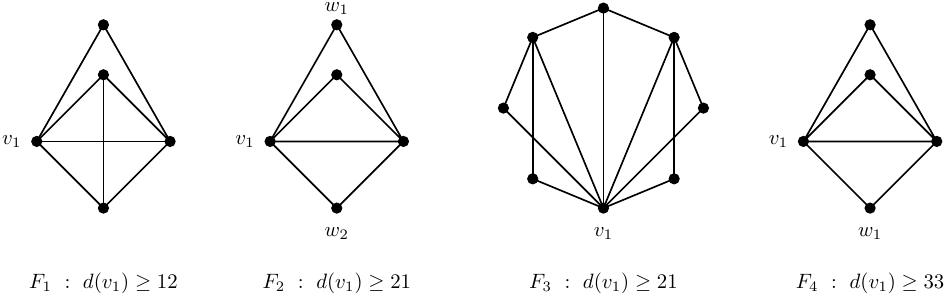}
	\caption{The   graphs    $F_{1}$, $F_2$, $F_3$ and $F_{4}$.}
	\label{fig1}
\end{figure}

We also quote the following lemma. (As designated above, $d_i(G)$ is attained by a vertex $v_i$.)


\begin{lemma}\label{2.6l}{\rm\cite{Ye2024two}} Let $G$ be a connected graph with $n~(n\ge 14)$ vertices such that $\kappa_2(G)\le 5$,  $d_1(G)\ge n-2$ and  $d_2(G)\le 4$. The following statements hold true:
	\begin{itemize}
		\item[(i)] If $w\not\in N_G(v_1)\cup \{v_1\}$, then $d_G(w)\le 3$ and  $w$ is adjacent to at most two $4$-vertices;
		\item[(ii)] Every two $4$-vertices (if any) are non-adjacent.
	\end{itemize}
\end{lemma}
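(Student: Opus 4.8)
The plan is to analyze the local structure around a vertex $w \notin N_G(v_1) \cup \{v_1\}$ by leveraging the hypothesis $\kappa_2(G) \le 5$, using it as a constraint: whenever $G$ contains one of the forbidden vertex-weighted subgraphs $F'_i$ from Figure~\ref{fig1}, the remark guarantees $\theta_2(Q^*(F_i)) > 5$, and by Lemmas~\ref{21l} and~\ref{26l} this forces $\kappa_2(G) > 5$, a contradiction. So the whole proof is a case analysis showing that violating (i) or (ii) always embeds such a forbidden configuration.

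**For part (i), first I would** suppose toward a contradiction that $d_G(w) \ge 4$ for some $w \notin N_G(v_1) \cup \{v_1\}$. Since $d_1(G) \ge n-2$, the vertex $v_1$ is adjacent to all but at most one other vertex; as $w$ is a non-neighbour of $v_1$, every neighbour of $w$ (being distinct from $v_1$) is adjacent to $v_1$. I would then pick four neighbours of $w$, all of which are themselves $v_1$-neighbours, and assemble the induced weighted graph consisting of $v_1$, $w$, and these four common neighbours. This should match the pattern of one of the $F_i$ (the one with a degree-$4$ center $w$ joined to $v_1$-neighbours each carrying weight reflecting their adjacency to $v_1$), yielding $\kappa_2(G) > 5$. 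For the second assertion of (i), I would suppose $w$ (with $d_G(w) \le 3$) is adjacent to three $4$-vertices; since each such $4$-vertex and $w$ itself attach to $v_1$, I would extract a weighted subgraph isomorphic to $F'_1$ or $F'_3$ and again invoke the remark. The bookkeeping is choosing weights consistently: a vertex's weight in $F_i$ is its true degree in $G$, and the constraint $c_j \le d_G(u_j)$ in Lemma~\ref{21l} is met by using the actual degrees (or smaller legal values) as the diagonal entries.

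**For part (ii)**, I would assume two $4$-vertices $x, y$ are adjacent. Both lie in $N_G(v_1)$ (since a $4$-vertex outside $N_G(v_1)\cup\{v_1\}$ is impossible once (i) holds, or is handled directly), so $v_1, x, y$ together with the remaining neighbours of $x$ and $y$ form a dense local cluster. I would build the induced weighted graph on $v_1, x, y$ and enough of their other neighbours to realize $F'_2$ (whose matrix $Q^*(F_2)$ is exactly the one displayed in the remark, with the adjacent weight-$4$ and weight-$3$ structure), forcing $\kappa_2(G) > 5$.

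**The hard part will be** the careful realization of each forbidden configuration as a genuine induced weighted subgraph with correct weights — specifically verifying that the chosen neighbours are distinct and that their mutual adjacencies match the edge set of the relevant $F_i$, rather than accidentally producing extra edges that change $Q^*(F_i)$. One must also confirm that the degree bound $d_2(G) \le 4$ legitimately caps the weights one assigns to the peripheral vertices, and that $n \ge 14$ leaves room for the required distinct vertices to exist. Once the embeddings are pinned down, the spectral contradiction is immediate from the remark, so the analytic content is entirely in the combinatorial casework of recognizing the $F_i$.
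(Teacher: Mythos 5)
First, a point of comparison: the paper does not actually prove this lemma --- it is imported verbatim from \cite{Ye2024two} --- so the benchmark is the forbidden-configuration method that the paper demonstrates in its Remark and in the proof of Corollary~\ref{31cl}, and your plan mirrors exactly that method: embed a vertex-weighted configuration around the offending vertices, bound $\kappa_2(G)$ from below via Lemmas~\ref{21l} and~\ref{26l}, and contradict $\kappa_2(G)\le 5$. Structurally your outline is sound: $d_1\ge n-2$ does force every neighbour of a non-neighbour $w$ of $v_1$ to be adjacent to $v_1$, and your implicit ordering (first claim of (i), then (ii), then second claim of (i)) is the right way to avoid the weight-bookkeeping problem you flagged, since deleting possible edges among peripheral $4$-vertices via Lemma~\ref{26l} would otherwise drop their degrees below the weight $4$ you want on the diagonal. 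For the first claim of (i) your configuration even checks out numerically: with diagonal $(n-2,4,2,2,2,2)$ on $\{v_1,w,u_1,\ldots,u_4\}$ and the $K_{2,4}$ adjacency, the relevant cubic evaluated at $5$ equals $11-n<0$, giving $\theta_2>5$ for all $n\ge 14$. Note, though, that this configuration (degree-$4$ centre \emph{non-adjacent} to $v_1$) matches none of the displayed $F_i$, so it is a new matrix you must verify, not a citation of the Remark.

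The genuine gap is in the sentence ``the spectral contradiction is immediate from the remark.'' The $F_i$ of Figure~\ref{fig1} are calibrated for the corollary's setting ($d_1=n-1$, $n\ge 22$ or $n\ge 34$), not for this lemma's weaker hypotheses ($d_1\ge n-2$, $n\ge 14$), and at the boundary your embeddings can fail to be strict. Concretely, for (ii): take adjacent $4$-vertices $x,y$ with $N(x)=\{v_1,y,a,a'\}$, $N(y)=\{v_1,x,b,b'\}$, all four peripheral vertices distinct, of degree $2$, adjacent to $v_1$, and $d(v_1)=n-2$. The weighted matrix on $\{v_1,x,y,a,a',b,b'\}$ with diagonal $(n-2,4,4,2,2,2,2)$ has, in its symmetric sector, the characteristic factor
\[
(n-2-\lambda)(\lambda^2-7\lambda+8)+6\lambda-16,
\]
whose value at $\lambda=5$ is $28-2n$; this \emph{vanishes} at $n=14$, so $\theta_2=5$ exactly, and Lemma~\ref{21l} yields only $\kappa_2(G)\ge 5$ --- perfectly consistent with $\kappa_2(G)\le 5$, hence no contradiction. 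The repair is to pad the configuration with one further neighbour $z$ of $v_1$ carrying weight $1$ (such $z$ exists because $d(v_1)\ge n-2=12$ while only six neighbours are used); the augmented determinant at $\lambda=5$ is then $2>0$ while its value at $\lambda=n-2$ is negative, producing two eigenvalues strictly above $5$. So each of your claimed embeddings has to be written out as an explicit matrix and $\theta_2>5$ verified for every $n\ge 14$, padding with extra weight-$1$ neighbours of $v_1$ wherever the bare configuration is tight; that quantitative casework is where the proof actually lives, and as proposed it is missing.
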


We continue with a corollary.

\begin{corollary}\label{31cl} Let $G$  be a graph  with $n$ vertices such that $\kappa_2(G)\le 5$,   $d_1(G)=n-1$ and  $d_2(G)\le 4$.
	\begin{itemize}
		\item[(i)] If $n\ge 22$,  then every $4$-vertex is adjacent to at most one $3$-vertex;
		\item[(ii)] If $n\ge 22$, then any pair of $4$-vertices $u$ and $v$ (if any) satisfies $N_G(u)\cap N_G(v)=\{v_1\}$;
		\item[(iii)]  If $n\ge 34$, then each $4$-vertex is non-adjacent to any $3$-vertex.
	\end{itemize}
\end{corollary}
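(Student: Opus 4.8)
The plan is to argue each part by contradiction, in every case exhibiting a five‑vertex set $U\subseteq V(G)$ (always containing $v_1$) whose vertex‑weighted induced subgraph, with weights taken to be the true degrees in $G$, realizes one of the graphs $F'_i$ of Figure~\ref{fig1}. Exactly as in the Remark, Lemmas~\ref{21l} and~\ref{26l} then give $\kappa_2(G)\ge\theta_2\big(Q^*(F_i)\big)>5$, contradicting $\kappa_2(G)\le 5$. The common starting point is that $d_1(G)=n-1$ forces $v_1$ to be adjacent to every other vertex, so $G$ is connected and, since $d_1\ge n-2$ and $n\ge 22\ge 14$, Lemma~\ref{2.6l} applies. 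Two facts will be used repeatedly: by Lemma~\ref{2.6l}(ii) any two $4$-vertices are non-adjacent; and because $v_1$ is universal, every vertex $\ne v_1$ that is adjacent to a $4$-vertex has degree in $\{2,3\}$ (at least $2$, since it also sees $v_1$; at most $3$, since a $4$-neighbour would violate Lemma~\ref{2.6l}(ii)).

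For (i), suppose a $4$-vertex $u$ is adjacent to two $3$-vertices $a,b$, and let $x$ be the remaining neighbour of $u$ other than $v_1$, so $d_G(x)\in\{2,3\}$. Taking $U=\{v_1,u,a,b,x\}$ with weights $c_j=d_G(\cdot)$, the principal submatrix $Q(G)[U]=\diag(c_1,\ldots,c_5)+A(G[U])$ equals $Q^*(F_2)$ (if $d_G(x)=2$) or $Q^*(F_1)$ (if $d_G(x)=3$), with first diagonal entry $d_G(v_1)=n-1$, up to at most one or two extra off-diagonal $1$'s should some of $a,b,x$ be mutually adjacent; each such variant is one of the $F'_i$, and in every case $\theta_2>5$ once $n\ge 22$. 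This contradicts $\kappa_2(G)\le 5$, so $u$ has at most one $3$-neighbour.

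For (ii), let $u,v$ be $4$-vertices; they are non-adjacent by Lemma~\ref{2.6l}(ii), while $v_1\in N_G(u)\cap N_G(v)$ since $v_1$ is universal. If there were another common neighbour $w\ne v_1$, then $w$ sees $v_1,u,v$, so $d_G(w)\in\{3,4\}$. When $d_G(w)=4$, already the four vertices $\{v_1,u,v,w\}$ force $\theta_2>5$, the trailing $3\times3$ block having Perron value $4+\sqrt{2}>5$; when $d_G(w)=3$, I enlarge $U$ by one further neighbour of $u$, landing in $F'_3$ or $F'_4$, again with $\theta_2>5$. Either way $\kappa_2(G)\le5$ is contradicted, so $N_G(u)\cap N_G(v)=\{v_1\}$. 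For (iii), suppose a $4$-vertex $u$ is adjacent to a $3$-vertex $a$, and let $x,y$ be its two remaining neighbours besides $v_1$, each of degree in $\{2,3\}$. With $U=\{v_1,u,a,x,y\}$ and true‑degree weights, the trailing $4\times4$ block has Perron value strictly above $5$ (about $5.1$ in the extremal case $d_G(x)=d_G(y)=2$); because this value lies closer to $5$ than in (i)--(ii), a larger border entry $d_G(v_1)=n-1$ is required to push $\theta_2$ past $5$, which is precisely why the threshold rises to $n\ge 34$.

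The combinatorial reductions are immediate once Lemma~\ref{2.6l} is available; the main obstacle is the spectral bookkeeping. Writing each relevant matrix as $M(d)$ with $d=d_G(v_1)$, its second-largest eigenvalue increases in $d$ toward the Perron value of the trailing block, so $\theta_2(M(d))>5$ only holds once $d$ is large enough — this is the genuine origin of the explicit bounds $22$ and $34$, and verifying them quantitatively is the crux. A second delicate point is the case $d_G(w)=3$ in part (ii): the four-vertex configuration there has trailing Perron value exactly $5$, so strictness fails and one is forced to adjoin a fifth vertex to recover $\theta_2>5$. Finally, one must confirm that the finitely many possible extra adjacencies among the auxiliary neighbours never drop $\theta_2$ below $5$, i.e. that every case still falls under some $F'_i$; this is routine but necessary to close the argument.
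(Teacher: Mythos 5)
Your proposal is correct and takes essentially the same route as the paper: both reduce each case, via Lemma~\ref{2.6l}, to a small vertex-weighted configuration bordered by $d(v_1)=n-1$ and invoke Lemmas~\ref{21l} and~\ref{26l} to force $\kappa_2(G)\ge\theta_2\big(Q^*(F_i)\big)>5$, using precisely the graphs $F_1$--$F_4$ (and your observation that the four-vertex configuration in (ii) has trailing Perron value exactly $5$, so a fifth vertex must be adjoined, is exactly why the paper's $F_3$ is a five-vertex graph). The only blemishes are cosmetic: the $d_G(w)=4$ subcase in (ii) is already vacuous by Lemma~\ref{2.6l}(ii), and your matrix with true-degree weights $(n-1,4,3,3,3)$ coincides with $Q^*(F_2)$ only after lowering the last diagonal entry to $2$, as Lemma~\ref{21l} permits --- neither affects correctness.
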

\begin{proof} (i): Suppose that $v$ is a $4$-vertex and $N_G(v)=\{v_1,w_1,w_2,w_3\}$.
	
	If $v$  is adjacent to exactly two 3-vertices, say $w_1$ and $w_2$, then we  consider two subcases: $w_1w_2 \in E(G)$ and $w_1w_2 \not \in E(G)$. In the former case, we have $\kappa_2(G)\ge \theta_2(Q^*(F_1))>5$ when $n\ge 22$, a contradiction. In the latter case, we have $\kappa_2(G)\ge \theta_2(Q^*(F_2))>5$ when $n\ge 13$, a contradiction.
	
	If $v$  is adjacent to exactly three 3-vertices $w_1,w_2,w_3$, due to Lemma~\ref{2.6l} we  need to consider two possibilities: $G[\{w_1,w_2,w_3\}]\cong P_2\cup P_1$ or $G[\{w_1,w_2,w_3\}]\cong3P_1$. By Lemmas~\ref{21l} and~\ref{26l}, we  have $\kappa_2(G)\ge \theta_2(Q^*(F_1))>5$ or $\kappa_2(G)\ge \theta_2(Q^*(F_2))>5$ when $n\ge 22$, which contradicts the assumptions of this lemma. Hence, (i) holds.
	
	(ii): Let $u$ and $v$ be two 4-vertices of $G$. By Lemma~\ref{2.6l}, $uv\not\in E(G)$. Note that $|N(u)\cap N(v)|\le 2$ when $n\ge 22$, since every $4$-vertex is adjacent to at most one 3-vertex  (by the same lemma) and  two $4$-vertices are non-adjacent (by Lemma~\ref{2.6l}).  If $|N(u)\cap N(v)|=2$, then $\kappa_2(G)\ge \theta_2(Q^*(F_{3}))>5$ when $n\ge 22$, which is impossible. Therefore,  $|N(u)\cap N(v)|=1$. Since $d_1=n-1$, the previous neighbourhoods necessarily intersect at $v_1$. This completes (ii).
	
	Finally, if $n\ge 34$ and  there exists a $4$-vertex $v$  adjacent to a 3-vertex, then  $\kappa_2(G)\ge \theta_2(Q^*(F_{4}))>5$, and~(iii) follows.
\end{proof}


\section{Signless Laplacian spectra of some joins}\label{sec3}

In this section, we compute the $Q$-spectrum of  $$K_1\vee (C_{l_1}\cup C_{l_2}\cup\cdots \cup C_{l_t}\cup sK_1)$$
and $$K_1\vee (K_{1,3}\cup C_{l_1}\cup C_{l_2}\cup\cdots \cup C_{l_{t-1}}\cup (s-1)K_1).$$

The $Q$-spectrum of $G$ is denoted by $S_Q(G)$; it is considered as a multiset, of course. Let $c^{(s)}$ and $\mul_{G}(c)$ denote the $s$ copies of a real number $c$ and the multiplicity of an eigenvalue~$c$ in $S_Q(G)$, respectively.

The first lemma tells us about the multiplicity of $1$ in the $Q$-spectrum of a graph containing vertices that share a particular neighbourhood. Although we believe it is known to the reader, a short proof is provided.

\begin{lemma}\label{l3.1} Let $G$ be a  graph of order $n~(n\ge 2)$ such that $N_G(z_1)=N_G(z_2)=\cdots =N_G(z_s)=\{z_{s+1}\}$  where $z_i\in V(G)$, for $s\ge 1$ and $1\leq i\leq s+1$. Then $\mul_{G}(1)\ge s-1$.
\end{lemma}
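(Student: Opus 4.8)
The plan is to exhibit $s-1$ linearly independent eigenvectors of $Q(G)$ for the eigenvalue $1$, supported entirely on the pendant vertices $z_1,\ldots,z_s$. The starting observation is that the hypothesis $N_G(z_i)=\{z_{s+1}\}$ forces $d_G(z_i)=1$ for each $1\le i\le s$, so the vertices $z_1,\ldots,z_s$ are pairwise non-adjacent and share the same single-element neighbourhood; in other words they are duplicate vertices of common degree~$1$, and the argument below is the standard computation for such vertices.

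For any pair of indices $1\le i<j\le s$ I would test the difference vector $x=e_{z_i}-e_{z_j}$, where $e_u\in\mathbb{R}^n$ denotes the standard basis vector at vertex~$u$. Reading $Q(G)x=D(G)x+A(G)x$ coordinate by coordinate is immediate. At $z_i$ the diagonal term gives $d_G(z_i)\cdot 1=1$, while the only neighbour $z_{s+1}$ carries the zero coordinate, so $(Qx)_{z_i}=1=x_{z_i}$; symmetrically $(Qx)_{z_j}=-1=x_{z_j}$. At the common neighbour $z_{s+1}$ the diagonal term vanishes (since $x_{z_{s+1}}=0$) and the two relevant adjacency contributions cancel, giving $(Qx)_{z_{s+1}}=x_{z_i}+x_{z_j}=0=x_{z_{s+1}}$. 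Finally, at every remaining vertex $w$ we have $x_w=0$, and $w$ is adjacent to none of the $z_i$ (as $N_G(z_i)=\{z_{s+1}\}$), so $(Qx)_w=0$. Hence $Q(G)x=x$, i.e. $e_{z_i}-e_{z_j}$ is an eigenvector for the eigenvalue~$1$.

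Taking the $s-1$ consecutive differences $e_{z_1}-e_{z_2},\,e_{z_2}-e_{z_3},\,\ldots,\,e_{z_{s-1}}-e_{z_s}$ yields a clearly linearly independent family. Equivalently, every vector supported on $\{z_1,\ldots,z_s\}$ whose coordinate sum is zero is an eigenvector for the eigenvalue~$1$, and this hyperplane has dimension $s-1$. Therefore $\mul_G(1)\ge s-1$.

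There is no genuine obstacle in this proof. The only point requiring care is the verification at $z_{s+1}$ and at the outside vertices: one must confirm that no vertex other than $z_{s+1}$ is adjacent to any $z_i$, which is exactly what $N_G(z_i)=\{z_{s+1}\}$ guarantees, so that the support of $x$ never leaks into the rest of the graph. Once this is noted, the eigenvalue equation $Q(G)x=x$ holds verbatim, irrespective of how $z_{s+1}$ is joined to the remainder of~$G$.
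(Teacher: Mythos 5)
Your proof is correct and follows essentially the same route as the paper: the authors also take the $s-1$ consecutive difference vectors supported on $z_1,\ldots,z_s$ (their $\boldsymbol{\psi}_j$ with entries $1$ at $z_j$, $-1$ at $z_{j+1}$, zero elsewhere), verify $Q(G)\boldsymbol{\psi}_j=\boldsymbol{\psi}_j$, and note their linear independence. Your coordinate-by-coordinate check at $z_i$, $z_{s+1}$, and the outside vertices just spells out the verification the paper leaves as "easy to check."
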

\begin{proof} Let $V(G)=\{z_1,z_2,\ldots, z_n\}$, for $n\ge s+1$. We fix ${\boldsymbol \psi_j}=(\psi_{j1},\psi_{j2},\ldots,\psi_{jn})^{\intercal}$, $1\leq j\leq s-1$, with $\psi_{jj}=1$, $\psi_{j,j+1}=-1$ and $\psi_{ji}=0$ for $i\notin \{j,j+1\}$. It is easy to check that $Q(G){\boldsymbol \psi_j}={\boldsymbol \psi_j}$. In addition, ${\boldsymbol \psi_1}, {\boldsymbol \psi_2}, \ldots, {\boldsymbol \psi_{s-1}}$ are linearly independent. Hence, $\mul_{G}(1)\ge s-1$.
\end{proof}

We consider the $Q$-spectrum of the former graphs.

\begin{lemma}\label{l3.2}Let $G\cong K_1\vee (C_{l_1}\cup C_{l_2}\cup\cdots \cup C_{l_t}\cup sK_1)$, $s,t\ge 1$. If $n$ is the order of $G$, then
	$$S_Q(G)=\left\{5^{(t-1)},1^{(s-1)},r_1,r_2,r_3, 3+2\cos\frac{2k\pi}{l_i}~:~ 1\le k\le l_i-1, 1\le i\le t\right\},$$
	where $r_1$, $r_2$ and $r_3$ are the  roots of $x^3-(n+5)x^2+5nx-4(n-s-1)$, and they satisfy $r_1>n\ge 5>r_2>2>1>r_3>0$.
\end{lemma}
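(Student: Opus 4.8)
The plan is to write down an explicit eigenbasis of $Q(G)$ in four families, verify by a dimension count that together they exhaust the spectrum, and only then locate the three exceptional eigenvalues $r_1,r_2,r_3$ by a sign analysis of the associated cubic. Denote the apex vertex of the join by $v$ (so $d(v)=n-1$), label the vertices of the cycle $C_{l_i}$ by $u_{i,1},\dots,u_{i,l_i}$ in cyclic order (each of degree $3$), and let $w_1,\dots,w_s$ be the $s$ isolated vertices (each of degree $1$, with $N_G(w_k)=\{v\}$). Note that $\sum_{i=1}^{t}l_i=n-s-1$.

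First I would produce the ``local'' eigenvectors. If a vector is supported on a single cycle $C_{l_i}$, vanishes on $v$ and on all other vertices, and sums to $0$ along $C_{l_i}$, then the apex row of $Q(G)$ is automatically satisfied, while on $C_{l_i}$ the matrix $Q(G)$ restricts to $3I+A(C_{l_i})$ (diagonal $3$ from the cycle degree, off-diagonal the adjacency of $C_{l_i}$). Hence the standard cycle eigenvectors of nonzero frequency furnish the eigenvalues $3+2\cos\frac{2k\pi}{l_i}$ for $1\le k\le l_i-1$, contributing $\sum_i(l_i-1)=n-s-1-t$ eigenvalues. Next, a vector that is constant equal to $c_i$ on $C_{l_i}$, vanishes on $v$ and on the $w_k$, and satisfies $\sum_i l_i c_i=0$ (so that the apex row again vanishes) is an eigenvector for the eigenvalue $5$; the space of admissible $(c_1,\dots,c_t)$ has dimension $t-1$. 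Finally, since $N_G(w_1)=\cdots=N_G(w_s)=\{v\}$, Lemma~\ref{l3.1} yields the eigenvalue $1$ with multiplicity $s-1$. These three families account for $(n-s-1-t)+(t-1)+(s-1)=n-3$ eigenvalues, and they are pairwise orthogonal (disjoint supports, or a zero-sum condition against a locally constant vector).

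For the remaining three eigenvalues I would use the partition of $V(G)$ into $\{v\}$, the set of all cycle vertices, and the set of all isolated vertices. A direct check of degrees and cross-adjacencies shows this partition is equitable for $Q(G)$, with quotient matrix
$$B=\begin{pmatrix} n-1 & n-s-1 & s\\ 1 & 5 & 0\\ 1 & 0 & 1\end{pmatrix}.$$
Its eigenvalues are eigenvalues of $Q(G)$, afforded by eigenvectors lying in the span of the three class-indicator vectors; these are independent from the previous families, whose vectors are zero-sum on each cycle or on the isolated set and hence orthogonal to the locally constant indicators. Expanding $\det(xI-B)$ gives exactly $p(x)=x^3-(n+5)x^2+5nx-4(n-s-1)$, so its roots are $r_1,r_2,r_3$, and the four families together provide all $n$ eigenvalues, establishing the displayed multiset.

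It remains to order the roots, which is the only genuinely computational step. Using $n-s-1=\sum l_i\ge 3$ and $s\ge 1$, I would evaluate $p(0)=p(5)=p(n)=-4(n-s-1)<0$, together with $p(1)=4s>0$ and $p(2)=2(n-4)+4s>0$. Since $p(0)<0<p(1)$, $p(2)>0>p(5)$, and $p(n)<0$ while $p(x)\to+\infty$ as $x\to\infty$, the cubic has three simple real roots, one in each of $(0,1)$, $(2,5)$ and $(n,\infty)$; combined with $n\ge 5$ this gives $r_1>n\ge 5>r_2>2>1>r_3>0$, as claimed. The main obstacle is the bookkeeping rather than any single hard estimate: one must confirm that the four eigenvector families are mutually independent and number exactly $n$, so that the listed multiset really is the entire spectrum, after which the ordering reduces to the handful of sign evaluations above.
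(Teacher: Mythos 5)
Your proof is correct and follows essentially the same route as the paper: your three ``local'' families (zero-sum vectors on a single cycle for $3+2\cos\frac{2k\pi}{l_i}$, locally constant zero-sum vectors for $5$, pendant differences for $1$ via Lemma~\ref{l3.1}) coincide with the paper's explicit eigenvectors, and your equitable-partition quotient matrix $B$ is a repackaging of the paper's ansatz of an eigenvector constant on the pendants and constant on all cycle vertices, which yields the same cubic $x^3-(n+5)x^2+5nx-4(n-s-1)$. If anything, your version supplies details the paper elides: the explicit sign evaluations $p(0)=p(5)=p(n)=-4(n-s-1)<0$, $p(1)=4s>0$, $p(2)=2(n-4)+4s>0$ replace the paper's ``verified directly,'' and your orthogonality argument (zero-sum vectors against class-indicator vectors) handles completeness and possible eigenvalue coincidences more cleanly than the paper's case-by-case linear-independence remark.
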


\begin{proof} As in Lemma~\ref{l3.1}, we suppose that $V(G)=\{z_i\,:\, 1\leq i\leq n\}$, where $d_G(z_n)=n-1$ and $d_G(z_i)=1$ for $1\leq i\leq s$. By the same lemma, $\mul_{G}(1)\ge s-1$. 	In what follows, we construct the eigenvectors for the remaining eigenvalues.

We first deal with the eigenvalue 5. Let ${\boldsymbol \zeta_j}=(\zeta_{j}(z_1),\zeta_{j}(z_2),\ldots,\zeta_{j}(z_n))^\intercal$ for $1\leq j\leq t-1$, where $$\zeta_{j}(v)=\left\{\begin{array}{rl}-l_{j+1}/l_1,&\text{for}~v\in V(C_{l_1}),\\ 1,& \text{for}~v\in V(C_{l_{j+1}}),\\ 0,&\text{otherwise}.\end{array}\right.$$

It is easy to check that this comprises a set of linearly independant eigenvectors associated with the eigenvalue $5$ of $Q(G)$. Thus, $\mul_{G}(5)\ge t-1$.

Let now  $V(C_{l_i})=\{w_{i1},w_{i2},\ldots,w_{il_i}\}$, $1\leq i\leq t$, and $A(C_{l_{i}}){\boldsymbol \varphi}=\rho {\boldsymbol \varphi}$ (${\boldsymbol \varphi}\ne {\bf 0}$), i.e., ${\boldsymbol \varphi}$ is an eigenvector for $A(C_{l_{i}})$ corresponding  to the eigenvalue $\rho$. It is well known that $\rho=2\cos(2k\pi/l_i)$, $0\leq k\leq l_i-1$. Moreover, by excluding $k=0$, we get that ${\boldsymbol \phi}=(\phi(z_{1}), \phi(z_{2}),\ldots,\phi(z_{n}))^\intercal$ with
$$\phi(v)=\left\{\begin{array}{rl}\varphi(v),&\text{for}~v\in V(C_{l_{i}}),\\ 0,& \text{otherwise},\end{array}\right.$$
is an eigenvector for $Q(G)$ corresponding to the eigenvalue $3+\rho$. To see this, one may observe that $I_{l_{i}}+Q(C_{l_{i}})=3I_{l_{i}}+A(C_{l_{i}})$ is a principal submatrix of $Q(G)$. Therefore, $3+2\cos(2k\pi/l_i)$, $1\leq k\leq l_{i}-1$,  are the eigenvalues of $Q(G)$, for every $i~(1\leq i\leq t)$.

	We now show that $r_1$, $r_2$ and $r_3$ are the eigenvalues of $Q(G)$.  Suppose $Q(G){\boldsymbol x}=c{\boldsymbol x}$ (${\boldsymbol x}\ne {\bf 0}$), where ${\boldsymbol x}=(x(z_1),x(z_2),\ldots,x(z_n))^\intercal$. Then for $z_n$ (the maximum degree vertex), we have
	\begin{align}\label{e4.1}  (c-n+1)x(z_n)=\sum_{i=1}^{n-1}(x(z_i)). \end{align}
	For the $s$ pendant vertices $z_1,z_2,\ldots,z_s$, we have
	\begin{align}\label{e4.2}
		(c-1)x(z_i)=x(z_n), 1\leq i\leq s.
	\end{align}
	So, if $c\ne 1$, then \begin{align}\label{e4.3}
		x(z_i)=\frac{x(z_n)}{c-1}, 1\leq i\leq s.
	\end{align}
	For the vertices in $V(C_{l_i})=\{w_{i1},w_{i2},\ldots,w_{il_i}\}$, $1\leq i\leq t$, we have
	\begin{equation*}\label{e4.4}\left\{
		\begin{array}{ll} (c-3)x(w_{i1})=x(w_{i2})+x(w_{i,l_i})+x(z_n),\\
			(c-3)x(w_{i2})=x(w_{i3})+x(w_{i,1})+x(z_n),\\
			\quad\quad\quad\quad \vdots \\
			(c-3)x(w_{i,l_i})=x(w_{i1})+x(w_{i,l_i-1})+x(z_n).
		\end{array}
		\right.\end{equation*}
	We set $x(w_{i1})=x(w_{i2})=\cdots =x(w_{i,l_i})$. If $c\ne 5$, then we have
	\begin{align}\label{e4.5}
		x(w_{ij})=\frac{x(z_n)}{c-5}, 1\leq j\leq l_i, 1\leq i\leq t.
	\end{align}
	Combining \eqref{e4.1}, \eqref{e4.3} and \eqref{e4.5}, we find
	$$(c-n+1)x(z_n)=s\frac{x(z_n)}{c-1}+(n-s-1)\frac{x(z_n)}{c-5}.$$
	For $x(z_n)\ne 0$, we compute $f(c)=c^3-(n+5)c^2+5nc-4(n-s-1)=0$. It is verified directly that the corresponding roots, $r_1,r_2$ and $r_3$, satisfy $r_1>n\ge 5>r_2>2>1>r_3>0$. In addition, $${\boldsymbol \eta_i}=\Big(\big(\frac{1}{r_i-1}\big)^{(s)},\big(\frac{1}{r_i-5}\big)^{(n-s-1)},1\Big)^\intercal$$
	are the corresponding eigenvectors.
	
	Gathering the previous results, we find that $r_1,r_2,r_3$ and  $3+2\cos\frac{2k\pi}{l_i}$, for $1\le k\le l_i-1, 1\le i\le t$, feature as the $Q$-eigenvalues of $G$.  In this context, one may observe that if $r_j=3+2\cos\frac{2k\pi}{l_i}$ occurs for some choice of $i, j ,k$, then the corresponding eigenvectors are linearly independant. This comprises the multiset of $n-(s-1)-(t-1)$ $Q$-eigenvalues. Moreover, we have obtained that 1 and 5 are the $Q$-eigenvalues with multiplicity at least $s-1$ and $t-1$, respectively. Again, if $1=3+2\cos\frac{2k\pi}{l_i}$, then the eigenvectors are linearly independant (those for $1$ are constructed in Lemma~\ref{l3.1}). Obviously, the previous lower bounds are attained, and we are done.
\end{proof}

\begin{remark}\label{r3.3} Note that  $3+2\cos(2k\pi/l_i)=1$ holds if and only if $l_i$ is even and $k=l_i/2$. Thus, $\mul_G(1)=s-1+c_{e}(C)$, where $c_{e}(C)$ denotes the number of even cycles among $C_{l_i}$. Besides, $n<r_1=\kappa_1(G)<n+1$ when $n\ge 7$ and $0<r_3=\kappa_n(G)<1$. Last but not least, for any given order $n$, $\kappa_1(G)$ and $\kappa_n(G)$ depend on the number $s$ of pendant vertices, but do not depend on the number of cycles nor the length of each cycle.
\end{remark}

We proceed with the latter graph.

\begin{lemma}\label{l3.4} Let $H\cong K_1\vee (K_{1,3}\cup C_{l_1}\cup C_{l_2}\cup\cdots \cup C_{l_{t-1}}\cup (s-1)K_1)$, $s\geq 1, t\geq 2$. If the order of $H$ is $n$, then
	$$S_Q(H)=\left\{5^{(t-1)},1^{(s-1)},2^{(2)},r_1,r_2,r_3, 3+2\cos\frac{2k\pi}{l_i}~:~ 1\le k\le l_i-1, 1\le i\le t-1\right\},$$
	where $r_1$, $r_2$ and $r_3$ are the roots of $x^3-(n+5)x^2+5nx-4(n-s-1)$, and they satisfy $r_1>n> 5>r_2>2>1>r_3>0$.
	
	In particular, for $H^*\cong K_1\vee (K_{1,3}\cup (s-1)K_1)$, $s\ge 1$, we have
	$$S_Q(H^*)=\left\{1^{(s-1)},2^{(2)},r_1,r_2,r_3\right\}.$$
\end{lemma}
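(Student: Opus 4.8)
The plan is to compute $S_Q(H)$ by exhibiting a full system of $n$ linearly independent eigenvectors, following the strategy of Lemma~\ref{l3.2} but treating the star $K_{1,3}$ as a new ingredient. I label the apex by $v$, the center and the three leaves of $K_{1,3}$ by $c$ and $\ell_1,\ell_2,\ell_3$, and record the degrees in $H$: the leaves have degree $2$ with common neighbourhood $\{c,v\}$, the center has degree $4$, each cycle vertex has degree $3$, each of the $s-1$ former isolated vertices is a pendant of degree $1$, and $v$ has degree $n-1$. First, exactly as in Lemma~\ref{l3.2}, since $3I+A(C_{l_i})$ is a principal submatrix of $Q(H)$ acting on vectors that vanish off $C_{l_i}$ and sum to zero on it, the numbers $3+2\cos(2k\pi/l_i)$ for $1\le k\le l_i-1$ and $1\le i\le t-1$ are eigenvalues. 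Second, a twin argument analogous to Lemma~\ref{l3.1} applies to the three leaves: any vector supported on $\{\ell_1,\ell_2,\ell_3\}$ with zero sum satisfies $Q(H)\boldsymbol{x}=2\boldsymbol{x}$, so $\mul_H(2)\ge 2$.

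The two remaining integer eigenvalues $5$ and $1$ each gain one extra unit of multiplicity from the star. For the eigenvalue $5$, I would start from the all-ones vector on each cycle (a local $5$-eigenvector exposing the nonzero sum $l_i$ to $v$) together with the star profile assigning $3$ to $c$ and $1$ to each leaf (also a local $5$-eigenvector, exposing the sum $6$ to $v$); imposing the single linear condition that the total weight seen by $v$ vanishes yields a space of dimension $(t-1)+1-1=t-1$ of genuine eigenvectors, whence $\mul_H(5)\ge t-1$. For the eigenvalue $1$, the analogous candidates are the $s-1$ pendants (each a local $1$-eigenvector exposing its own weight to $v$) and the star profile assigning $-1$ to $c$ and $1$ to each leaf (a local $1$-eigenvector exposing the sum $2$ to $v$); the same zero-sum condition gives a space of dimension $s-1$, so $\mul_H(1)\ge s-1$.

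The last three eigenvalues arise from the radial eigenvectors that are constant on the pendants, constant on all cycle vertices, symmetric on the star, and nonzero at $v$. Writing $Q(H)\boldsymbol{x}=c\boldsymbol{x}$ with $c\notin\{1,2,5\}$ and solving the local equations gives $x(z_i)=x(v)/(c-1)$ on pendants, $x(w)=x(v)/(c-5)$ on cycles, and, after eliminating between the leaf and center equations via $(c-4)(c-2)-3=(c-1)(c-5)$, the star values $x^{\ell}=(c-3)x(v)/[(c-1)(c-5)]$ and $x^{c}=(c+1)x(v)/[(c-1)(c-5)]$. Substituting all of these into the equation at $v$ and clearing denominators, the star contributes exactly $3(c-3)+(c+1)=4(c-2)$ to the numerator, and the whole relation collapses to $c^3-(n+5)c^2+5nc-4(n-s-1)=0$, i.e.\ precisely the cubic of Lemma~\ref{l3.2}; its roots are $r_1,r_2,r_3$. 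Their location follows from the signs $f(0)<0$, $f(1)=4s>0$, $f(2)=2n+4s-8>0$, $f(5)=-4(n-s-1)<0$, $f(n)=-4(n-s-1)<0$, placing $r_3\in(0,1)$, $r_2\in(2,5)$ and $r_1>n$ (here $n>5$ since $t\ge2$ forces $n\ge 8$), and these same signs confirm $1,2,5$ are not roots, validating the divisions above.

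Finally I would verify the count $(t-1)+(s-1)+2+3+\sum_{i=1}^{t-1}(l_i-1)=n$ and the linear independence of the constructed vectors; eigenvectors for distinct eigenvalues are orthogonal because $Q(H)$ is symmetric, while coincidences such as $r_j=3+2\cos(2k\pi/l_i)$, $1=3+2\cos(2k\pi/l_i)$ or $2=3+2\cos(2k\pi/l_i)$ are handled exactly as in Lemma~\ref{l3.2}, the relevant eigenvectors remaining independent. The statement for $H^*\cong K_1\vee(K_{1,3}\cup(s-1)K_1)$ is the cycle-free specialization: the cosine family and the eigenvalue $5$ disappear, leaving $\{1^{(s-1)},2^{(2)},r_1,r_2,r_3\}$ from the leaf twins, the pendant--star $1$-eigenvectors, and the radial cubic, whose constant term $4(n-s-1)$ now reflects the vanishing cycle count $n-s-4=0$. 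The main obstacle is the radial computation: checking that the star's contribution to the apex equation is precisely $4(c-2)$, so that the cubic reproduces the one from Lemma~\ref{l3.2} \emph{verbatim} rather than some other polynomial; this is the exact coincidence underlying the $Q$-cospectrality claimed later, and everything else is a routine adaptation of the cycle-and-pendant bookkeeping.
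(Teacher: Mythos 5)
Your proposal is correct and takes essentially the same route as the paper: both proofs exhibit the identical system of eigenvectors --- zero-sum vectors on the three leaves for $2^{(2)}$, pendant/star combinations for $1$, cycle/star combinations for $5$, the cosine family on the cycles, and the radial vector whose star entries $\frac{c+1}{(c-1)(c-5)}$ (center) and $\frac{c-3}{(c-1)(c-5)}$ (leaves) coincide with the paper's ${\boldsymbol \eta_i}$, leading to the same cubic $x^3-(n+5)x^2+5nx-4(n-s-1)$. Your ``local eigenvector with zero exposed sum'' bookkeeping is just a repackaging of the paper's explicit vectors ${\boldsymbol \psi_{s-1}}$ and ${\boldsymbol \zeta_{t-1}}$, and your sign analysis of $f$ at $0,1,2,5,n$ supplies the root-location details that the paper leaves as a direct verification.
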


\begin{proof} We only consider the case when $s,t\ge3$, as the remaining possibility is treated by following the same lines. For convenience, we set $V(H)=\{z_i\,:\, 1\leq i\leq n\}$, with $d_H(z_n)=n-1$, $d_H(z_{n-4})=4,d_H(z_{n-3})=d_H(z_{n-2})=d_H(z_{n-1})=2$ and $d_H(z_i)=1$, $1\leq i\leq s-1$. Note that $K_{1,3}$ is a component of $H-z_n$ with $V(K_{1,3})=\{z_{n-4},z_{n-3},z_{n-2},z_{n-1}\}$.
	
The eigenvectors ${\boldsymbol \psi_j}$, $1\leq j\leq s-2$, for $1$ are constructed as in the proof of Lemma~\ref{l3.1}, whereas the eigenvectors ${\boldsymbol \zeta_j}$, $1\leq j\leq t-2$, for $5$ and all the eigenvectors for $3+2\cos\frac{2k\pi}{l_i}, 1\le k\le l_i-1, 1\le i\le t-1$, are constructed as in the proof of Lemma~\ref{l3.2}. 	

Next,
$${\boldsymbol \psi_{s-1}}
=\big(2,0^{(n-6)},1,-1,-1,-1,0\big)^\intercal,$$
$${\boldsymbol \zeta_{t-1}}
=\big(0^{(s-1)},\big(-{6}/{l_1}\big)^{(l_1)},0^{(l_2)},\ldots,0^{(l_{t-1})},
3,1,1,1,0\big)^\intercal,$$
and
$$
\big(0^{(n-5)},0,1,-1,0,0\big)^\intercal,~ \big(0^{(n-5)},0,0,1,-1,0\big)^\intercal,$$
are associated with $1$, $5$ and $2^{(2)}$, respectively.

In addition, the roots, $r_1, r_2$ and $r_3$, of $x^3-(n+5)x^2+5nx-4(n-s-1)$ satisfy the inequalities given in the formulation of this lemma, and they appear as the $Q$-eigenvalues of $H$ afforded by 	$${\boldsymbol \eta_i}=\Big(\big(\frac{1}{r_i-1}\big)^{(s-1)},\big(\frac{1}{r_i-5}\big)^{(n-s-4)},
	\frac{r_i+1}{(r_i-1)(r_i-5)}, \big(\frac{r_i-3}{(r_i-1)(r_i-5)}\big)^{(3)},   1\Big)^\intercal,$$
	which is a result of a simple algebraic calculus.
	
Linear independence in matching cases are considered as in the proof of Lemma~\ref{l3.2}. Finally, the result for $H^*$ is extracted by setting $t=1$.
\end{proof}

From Lemmas~\ref{l3.2} and~\ref{l3.4}, we  immediately deduce the following corollary.
\begin{corollary}\label{c3.5} The graphs $$K_1\vee (C_3\cup C_{l_1}\cup C_{l_2}\cup\cdots \cup C_{l_{t-1}}\cup sK_1)~~\text{and}~~K_1\vee (K_{1,3}\cup C_{l_1}\cup C_{l_2}\cup\cdots \cup C_{l_{t-1}}\cup (s-1)K_1),$$ with  $s,t\ge 1$, are $Q$-cospectral.
\end{corollary}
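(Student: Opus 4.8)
The plan is to compare the two $Q$-spectra term by term, exploiting the fact that Lemmas~\ref{l3.2} and~\ref{l3.4} have already done all the heavy lifting by computing both spectra explicitly. I would begin by applying Lemma~\ref{l3.2} to the first graph, $G_1 := K_1\vee (C_3\cup C_{l_1}\cup\cdots\cup C_{l_{t-1}}\cup sK_1)$. Here the cycles are $C_3, C_{l_1},\ldots,C_{l_{t-1}}$, so there are $t$ cycles in total, and the lemma furnishes the eigenvalue $5$ with multiplicity $t-1$, the eigenvalue $1$ with multiplicity $s-1$, the three cubic roots $r_1,r_2,r_3$, and the arc-type eigenvalues $3+2\cos\frac{2k\pi}{l_i}$.

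The first key observation is that the single $C_3$ factor contributes, via the formula $3+2\cos\frac{2k\pi}{3}$ for $k=1,2$, exactly the two values $3+2\cos\frac{2\pi}{3}=3-1=2$ and $3+2\cos\frac{4\pi}{3}=3-1=2$; that is, $C_3$ supplies precisely $2^{(2)}$. I would isolate this contribution and leave the remaining cycles $C_{l_1},\ldots,C_{l_{t-1}}$ to supply their arc-type eigenvalues untouched. Thus the spectrum of $G_1$ reorganizes as $\{5^{(t-1)},1^{(s-1)},2^{(2)},r_1,r_2,r_3,\,3+2\cos\frac{2k\pi}{l_i}: 1\le k\le l_i-1,\,1\le i\le t-1\}$, where now the index $i$ ranges only over the non-triangular cycles.

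Next I would apply Lemma~\ref{l3.4} to the second graph $H := K_1\vee (K_{1,3}\cup C_{l_1}\cup\cdots\cup C_{l_{t-1}}\cup (s-1)K_1)$, reading off its spectrum directly from the lemma's statement: $\{5^{(t-1)},1^{(s-1)},2^{(2)},r_1,r_2,r_3,\,3+2\cos\frac{2k\pi}{l_i}: 1\le k\le l_i-1,\,1\le i\le t-1\}$, with $r_1,r_2,r_3$ the roots of the same cubic $x^3-(n+5)x^2+5nx-4(n-s-1)$. The crucial point making the two spectra coincide is that both graphs have the same order $n$ and the same number $s$ of pendant-type contributions to the cubic: in $G_1$ one counts the $s$ pendant vertices directly, while in $H$ the defining parameter is $s-1$ pendant vertices together with the $K_{1,3}$, yet Lemma~\ref{l3.4} shows the cubic is governed by the very same coefficient $4(n-s-1)$. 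I would verify that the constant terms match by checking that $G_1$ and $H$ have equally many vertices (the $C_3$ on three vertices is traded for the $K_{1,3}$ on four vertices against one fewer isolated vertex, a net wash) and equally many non-pendant neighbours of the dominating vertex, so that the polynomial, and hence $\{r_1,r_2,r_3\}$, is identical.

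The only real obstacle is bookkeeping rather than mathematics: one must confirm the order-count and the cubic's invariance carefully, and check that the multiplicities of the shared eigenvalues $5$, $1$, and $2$ tally exactly (e.g.\ ensuring no hidden coincidence between a root $r_j$ and an arc-value changes a multiplicity on only one side). Once the reorganized multisets are laid side by side and seen to agree entry for entry, $Q$-cospectrality of the two graphs follows, and since they are plainly non-isomorphic (one contains a triangle, the other does not), the corollary is established.
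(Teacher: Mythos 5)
Your proposal is correct and takes essentially the same route as the paper, which deduces Corollary~\ref{c3.5} immediately from Lemmas~\ref{l3.2} and~\ref{l3.4}: the whole content is your observation that the $C_3$ factor contributes exactly $3+2\cos\frac{2k\pi}{3}=2$ for $k=1,2$, i.e.\ the pair $2^{(2)}$, while both graphs have the same order $n$ and the same parameter $s$, so the cubic $x^3-(n+5)x^2+5nx-4(n-s-1)$ and all remaining multiplicities coincide. The only point you leave implicit is the boundary case $t=1$, which the paper covers via the spectrum of $H^*\cong K_1\vee(K_{1,3}\cup(s-1)K_1)$ stated at the end of Lemma~\ref{l3.4}.
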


We remark that the previous corollary covers a particular case $t=1$.

\section{Proof  of Theorem~\ref{11t}}\label{sec4}

Let $G$ be as in the formulation of Theorem~\ref{11t}. Two particular cases are resolved in the previous references: for $s=0$, $G$ is DQS by~\cite{Liu21}, while for $t=1$, the statement of the theorem follows from~\cite{Ye2024two}. Therefore, we suppose $s\ge 1$ and $t\ge 2$. Let further $H$ be a graph (if any) that is $Q$-cospectral with~$G$.

On the basis of Lemma~\ref{l3.2}, we deduce the following setting: $$0<\kappa_n(H)<1<\kappa_2(H)=5<n<\kappa_1(H).$$
We abbreviate $n_i(G)$, $d_j(G)$, $n_i(H)$ and $d_j(H)$ to $n_i$, $d_j$, $n_{i}^*$ and $d_{j}^*$, respectively; clearly, this applies for $1\leq i\leq n-1$ and $1\leq j\leq  n$.
From Lemma~\ref{l2.10}, we have that $H$ is connected with $d_2^*\le 4$ (this equality will be frequently used).

We  prove the following result.

\begin{lemma}\label{l4.1} If $n\ge 14$, then $d_1^*=d_1=n-1$.
\end{lemma}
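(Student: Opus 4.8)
The plan is to first trap $d_1^*$ in the two-element set $\{n-2,n-1\}$ and then eliminate the value $n-2$ by a counting argument combining spectral moments with adjacency restrictions. Note at the outset that $d_1=n-1$ in $G$ (the apex $K_1$ is adjacent to everything), and $d_1^*\le n-1$ holds trivially. Since $H$ is $Q$-cospectral with $G$, the displayed inequalities preceding the lemma give $\kappa_2(H)=5$ and $\kappa_1(H)>n$. First I would apply Lemma~\ref{l2.10} to get that $H$ is connected with $d_2^*\le 4$ and $d_1^*\ge n-3$. Since $d_1^*\ge n-3\ge 11$ for $n\ge 14$ and the minimum degree obeys $1\le d_n^*\le d_2^*\le 4$, exactly one of the two degree hypotheses of Lemma~\ref{23l} holds (the first when $d_n^*=1$, the second when $d_n^*\ge 2$), so that lemma yields $d_1^*\ge \kappa_1(H)-3>n-3$, whence $d_1^*\in\{n-2,n-1\}$.

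The crux is to rule out $d_1^*=n-2$, which I would do by contradiction. Comparing the first two power sums in Lemma~\ref{24l} for the $Q$-cospectral pair $G,H$ gives $\sum_i d_i(H)=\sum_i d_i(G)=4(n-1)-2s$ and $\sum_i d_i(H)^2=\sum_i d_i(G)^2=(n-1)^2+9(n-1)-8s$, where the values for $G$ are read off from its degree sequence (one vertex of degree $n-1$, $s$ pendant vertices, and $n-1-s$ cycle vertices of degree $3$). Writing $a_k$ for the number of degree-$k$ vertices ($k\in\{1,2,3,4\}$) in $V(H)\setminus\{v_1\}$ and inserting $d_1^*=n-2$, these two identities together with $a_1+a_2+a_3+a_4=n-1$ form a linear system which I would solve for everything in terms of the single free parameter $a_4$; in particular one finds $a_2=3a_4-2n+7$ and $a_1+2a_2+3a_3=(3n-2-2s)-4a_4$.

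Next I would feed in the structural content of Lemma~\ref{2.6l}. Part (i) forces every $4$-vertex into $N_H(v_1)$ (a vertex outside $N_H(v_1)\cup\{v_1\}$ has degree at most $3$, and $v_1$ itself has degree $n-2\ne 4$), while part (ii) forces distinct $4$-vertices to be non-adjacent. Hence each of the $a_4$ four-vertices spends one edge on $v_1$ and sends its remaining three edges to $1$-, $2$- and $3$-vertices; counting edge-endpoints on that side gives $3a_4\le a_1+2a_2+3a_3=(3n-2-2s)-4a_4$, i.e.\ $7a_4\le 3n-2-2s$. Substituting this bound into $a_2=3a_4-2n+7$ yields $a_2\le \tfrac{-5n+43-6s}{7}$, which is strictly negative for every $n\ge 14$ and $s\ge 1$, contradicting $a_2\ge 0$. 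Therefore $d_1^*=n-1=d_1$.

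The localization to $\{n-2,n-1\}$ is routine once Lemmas~\ref{l2.10} and~\ref{23l} are in hand; the main obstacle is the final elimination of $n-2$, which rests on correctly marrying the moment identities (which determine the degree multiplicities up to the one parameter $a_4$) with the cap on coexisting $4$-vertices coming from Lemma~\ref{2.6l}. The delicate point is making the edge-count inequality $7a_4\le 3n-2-2s$ sharp enough to overpower the relation $a_2=3a_4-2n+7$, so I would verify the coefficients of the linear system with care, since the contradiction hinges on them.
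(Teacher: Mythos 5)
Your proposal is correct, and while its first half coincides with the paper, its second half takes a genuinely different and shorter route. The localization step is exactly the paper's: Lemma~\ref{l2.10} gives connectivity, $d_2^*\le 4$ and $d_1^*\ge n-3$, and Lemma~\ref{23l} (whose degree hypotheses you rightly check split according to $d_n^*=1$ or $2\le d_n^*\le 4$) yields $d_1^*\ge \kappa_1(H)-3>n-3$, hence $d_1^*\in\{n-2,n-1\}$. Your moment system is also the same as the paper's: your relations $a_2=3a_4-2n+7$, $a_3=2n-s-4-3a_4$, $a_1=n+s-4-a_4$ reproduce the paper's~\eqref{e52} exactly (I checked the coefficients; they are right). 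The divergence is in how $d_1^*=n-2$ is killed. The paper observes $n_2^*+n_3^*=3-s\ge 0$, restricts to $s\in\{1,2,3\}$, and then runs a case analysis with subcases, contrasting the lower bound $\mul_H(1)\ge n_1^*-2$ from Lemma~\ref{l3.1} against the upper bound on $\mul_G(1)$ from Remark~\ref{r3.3} for large $n$, and invoking Lemma~\ref{2.6l}(ii) to force $\kappa_2(H)>5$ for the remaining small orders ($n>26$, $n>30$, $n>34$, etc.). You instead extract from Lemma~\ref{2.6l} that every $4$-vertex lies in $N_H(v_1)$ (since $d_1^*=n-2>4$ and part (i) caps outside degrees at $3$) and that $4$-vertices are pairwise non-adjacent, so each spends exactly three edges on $\{1,2,3\}$-vertices, giving $3a_4\le a_1+2a_2+3a_3=(3n-2-2s)-4a_4$ and hence $a_2\le\frac{-5n+43-6s}{7}<0$ for $n\ge 14$ — a single uniform contradiction with no case split, no eigenvalue-multiplicity argument, and no dependence on Lemma~\ref{l3.1} or Remark~\ref{r3.3} (indeed your bound already bites for $n\ge 9$ and any $s\ge 0$). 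The paper's route buys nothing extra here; your edge-count is sharper and cleaner, at the modest cost of having to note carefully that connectivity forces minimum degree at least $1$ so that $a_1+a_2+a_3+a_4=n-1$, and that the unique vertex outside $N_H(v_1)\cup\{v_1\}$ cannot be a $4$-vertex — both of which you handle correctly.
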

\begin{proof} By Lemma~\ref{l2.10}, we have $d_1^*\ge n-3\ge 11$. By Lemma \ref{23l}, $n<\kappa_1(H)\le d_1^*+3$, and thus $d_1^*\ge n-2$. Suppose that $d_1^*=n-2$.
	
	From Lemma \ref{24l}, we have 	
	\begin{equation}\label{e51}\left\{
		\begin{array}{ll}  2n_3+6n_4+d_1(d_1-3)=2n_{3}^*+6n_{4}^*+d_{1}^*(d_{1}^*-3),\\ n_2-3n_4-d_1(d_1-4)=n_{2}^*-3n_{4}^*-d_{1}^*(d_{1}^*-4),\\
			2n_1+2n_4+d_1(d_1-5)=2n_{1}^*+2n_{4}^*+d_{1}^*(d_{1}^*-5).
		\end{array}
		\right.\end{equation}
	Since $d_1=n-1$, $d_1^*=n-2$, $n_2=0$ and $n_4=0$, from \eqref{e51}, we have
	\begin{equation}\label{e52}\left\{
		\begin{array}{ll}  n_{3}^*=2n-s-4-3n_{4}^*,\\
			n_{2}^*=3n_{4}^*-2n+7,\\
			n_{1}^*=n+s-4-n_{4}^*.
		\end{array}
		\right.\end{equation}
	Now, from \eqref{e52}, we find
	\begin{align}\label{e53} n_2^*+n_3^*=3-s\ge 0.
	\end{align}
	Therefore, $s\in\{1,2,3\}$. These cases are considered separately.
	
	\smallskip\noindent{\textit{Case 1: $s=1$.}}  Remark~\ref{r3.3} implies $\mul_G(1)\le (n-2)/4$, while  by~\eqref{e53}, we have $n_2^*+n_3^*=2$.
	
	\smallskip\noindent{\textit{Subcase 1.1: $n_2^*=0$ and $n_3^*=2$.}} By \eqref{e52}, we get $n_4^*=(2n-7)/3$ and $n_1^*=(n-2)/3$. Note that $d_1^*=n-2$. If $n>26$,  Lemma~\ref{l3.1} leads to the impossible scenario: $\mul_H(1)\ge n_1^*-2>(n-2)/4\ge \mul_G(1)$. If $14\le n\le 26$, we have $\kappa_2(H)>5$ since $n_4^*=(2n-7)/3$ and $d_1^*=n-2$ (by Lemma~\ref{2.6l}(ii)), but this contradicts $\kappa_2(H)=5$.
	
	\smallskip\noindent{\textit{Subcase 1.2: $n_2^*=1$ and $n_3^*=1$.}} Here, \eqref{e52} leads to $n_4^*=(2n-6)/3$ and $n_1^*=(n-3)/3$, and Lemma~\ref{l3.1} leads to $\mul_H(1)\ge n_1^*-2>(n-2)/4\ge \mul_G(1)$, whenever $n>30$. For $14\le n\le30$, as in the previous subcase we arrive at $\kappa_2(H)>5$ (since $n_4^*=(2n-6)/3$ and $d_1^*=n-2$ by Lemma \ref{2.6l}(ii)), which is impossible.
	
	\smallskip\noindent{\textit{Subcase 1.3: $n_2^*=2$ and $n_3^*=0$.}} From $n_4^*=(2n-5)/3$ and $n_1^*=(n-4)/3$, we arrive at $\mul_H(1)\ge n_1^*-2>(n-2)/4\ge \mul_G(1)$ for  $n>34$. The case $14\le n\le34$ is resolved as before.
	
	\smallskip\noindent{\textit{Case 2: $s=2$.}} Remark~\ref{r3.3} implies $\mul_G(1)\le (n+1)/4$, while~\eqref{e53} leads to $n_2^*+n_3^*=1$. There are two subcases ($(n_2^*, n_3^*)=(0,1)$ and $(n_2^*, n_3^*)=(1,0)$). Each is resolved as in the previous part of the proof: On the basis of~\eqref{e52} and Lemma \ref{l3.1}, we arrive at a contradiction treating $n\geq 23$ (for the former one) and $n\geq 27$ (for the latter one), whereas the remaining possibilities for $n$ are eliminated by Lemma~\ref{2.6l}(ii) and the condition $\kappa_2(H)=5$.
%
%
	
	\smallskip\noindent{\textit{Case 3: $s=3$.}}  Remark \ref{r3.3} gives $\mul_G(1)\le (n+4)/4$, and~\eqref{e53} gives  $n_2^*=n_3^*=0$.  Lemma~\ref{l3.1} eliminates the possibility $n>20$, and Lemma~\ref{2.6l}(ii) treats the remaining orders.
\end{proof}

Henceforth, $Z_n$ denotes a tree obtained by duplicating a pendant vertex of the path~$P_{n-1}$, $n\geq 4$.

\begin{lemma}\label{l4.2}  If $n\ge 22$, then $H$ does not contain $K_1\vee (Z_6\cup 15K_1)$ as a subgraph.
\end{lemma}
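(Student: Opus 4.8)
The plan is to argue by contradiction: assuming that $H$ contains $F:=K_1\vee(Z_6\cup 15K_1)$ as a subgraph, I would show that this forces $\kappa_2(H)>5$, contradicting the standing equality $\kappa_2(H)=5$. The entire difficulty is concentrated in the \emph{fixed} $22$-vertex graph $F$: once $\kappa_2(F)>5$ is established, the lemma follows by a soft interlacing argument. Indeed, deleting from $H$ every edge that is not an edge of $F$ leaves the graph $F\cup(n-22)K_1$, whose $Q$-spectrum is $S_Q(F)$ together with $n-22$ extra zeros; since $\kappa_2(F)>5>0$, the two largest $Q$-eigenvalues of $F\cup(n-22)K_1$ are exactly $\kappa_1(F)$ and $\kappa_2(F)$. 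Repeated application of Lemma~\ref{26l} then gives $\kappa_2(H)\ge\kappa_2(F\cup(n-22)K_1)=\kappa_2(F)>5$. Here $n\ge 22$ is used only to guarantee that $F$ fits inside $H$.

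It remains to prove $\kappa_2(F)>5$, which is the heart of the matter. Ordering the vertices of $F$ as the apex $v_1$ (of degree $21$ in $F$), then the six vertices of $Z_6$, then the fifteen pendant vertices, the signless Laplacian becomes
\begin{equation*}
Q(F)=\begin{pmatrix} 21 & \mathbf{1}_6^{\intercal} & \mathbf{1}_{15}^{\intercal}\\ \mathbf{1}_6 & C & O\\ \mathbf{1}_{15} & O & I_{15}\end{pmatrix},\qquad C=I_6+Q(Z_6),
\end{equation*}
where $\mathbf{1}_k$ is the all-ones vector of length $k$. For an eigenvalue $\lambda$ whose eigenvector does not vanish at $v_1$ and with $\lambda\notin\{1\}$ and $\lambda$ not an eigenvalue of $C$, the eigenvector component at each pendant equals $(\lambda-1)^{-1}$ times its value at $v_1$, and the restriction to $V(Z_6)$ equals $(\lambda I-C)^{-1}\mathbf{1}_6$ times its value at $v_1$. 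Substituting into the row for $v_1$ reduces the eigenvalue equation to the secular equation
\begin{equation*}
\mathbf{1}_6^{\intercal}(C-\lambda I)^{-1}\mathbf{1}_6=21-\lambda+\frac{15}{\lambda-1}.
\end{equation*}

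I would analyse the function $h(\lambda)=\mathbf{1}_6^{\intercal}(C-\lambda I)^{-1}\mathbf{1}_6-\bigl(21-\lambda+\tfrac{15}{\lambda-1}\bigr)$, whose zeros are the eigenvalues in question, using two finite computations on the $6\times6$ matrix $Q(Z_6)$. First, the $Q$-spectrum of $Z_6$ has $\kappa_1(Z_6)\approx 4.21>4$ and $\kappa_2(Z_6)=3<4$ (the value $3$ being an exact eigenvalue), so $C$ has a \emph{simple} largest eigenvalue $1+\kappa_1(Z_6)\in(5,6)$ with positive Perron vector not orthogonal to $\mathbf{1}_6$; hence $\mathbf{1}_6^{\intercal}(C-\lambda I)^{-1}\mathbf{1}_6\to+\infty$ as $\lambda\uparrow 1+\kappa_1(Z_6)$, while the next pole, at $1+\kappa_2(Z_6)=4$, lies below $5$. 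Second, since $C-5I=Q(Z_6)-4I$, solving $(Q(Z_6)-4I)\mathbf{y}=\mathbf{1}_6$ and summing the coordinates gives $\mathbf{1}_6^{\intercal}(C-5I)^{-1}\mathbf{1}_6=\tfrac{37}{2}$, whereas the right-hand side at $\lambda=5$ equals $16+\tfrac{15}{4}=\tfrac{79}{4}$. Because $\tfrac{37}{2}=\tfrac{74}{4}<\tfrac{79}{4}$, we have $h(5)<0$; as $h$ is continuous on $(5,\,1+\kappa_1(Z_6))$ (no interior pole) and tends to $+\infty$ at the right endpoint, it has a zero $\mu\in(5,\,1+\kappa_1(Z_6))\subset(5,6)$. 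This $\mu$ is a $Q$-eigenvalue of $F$, and since $\kappa_1(F)>21$ we have $\mu\ne\kappa_1(F)$, whence $\kappa_2(F)\ge\mu>5$, as required.

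The main obstacle is exactly this threshold computation, and it clarifies the role of the fifteen isolated vertices. If one worked only with $\{v_1\}\cup V(Z_6)$, the secular equation at $\lambda=5$ would compare $\tfrac{37}{2}$ with $21-5=16$, giving $h(5)=\tfrac{5}{2}>0$ and therefore pushing the relevant root \emph{below} $5$; the pendant vertices supply precisely the term $-\tfrac{15}{\lambda-1}$, equal to $-\tfrac{15}{4}$ at $\lambda=5$, which raises the right-hand side above $\tfrac{37}{2}$ (any number $\ge 11$ of pendants already suffices, and $15$ is the count that makes $F$ have the minimal admissible order $22$). Thus the only delicate points are getting the two exact rational facts $\kappa_1(Z_6)>4$ and $\mathbf{1}_6^{\intercal}(Q(Z_6)-4I)^{-1}\mathbf{1}_6=\tfrac{37}{2}$ right and checking that the intermediate pole $1+\kappa_2(Z_6)=4$ stays out of the interval; the passage from $\kappa_2(F)>5$ back to $\kappa_2(H)>5$ is then immediate from Lemma~\ref{26l}.
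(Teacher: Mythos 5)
Your proof is correct and follows the same route as the paper: the paper's own proof of Lemma~\ref{l4.2} is a one-line application of Lemmas~\ref{21l} and~\ref{26l} reducing to the assertion $\kappa_2(K_1\vee (Z_6\cup 15K_1))>5$, which it leaves as an (implicitly computational) fact, whereas you additionally supply a rigorous certificate for it -- and your two rational facts check out: $3$ is indeed an exact $Q$-eigenvalue of $Z_6$ with $\kappa_1(Z_6)\approx 4.214$ the largest root of $\lambda^3-6\lambda^2+8\lambda-2$, and solving $(Q(Z_6)-4I)\mathbf{y}=\mathbf{1}_6$ gives $\mathbf{y}=\tfrac14(9,9,31,17,7,1)^{\intercal}$ with coordinate sum $\tfrac{37}{2}$, so $h(5)=\tfrac{37}{2}-\tfrac{79}{4}<0$ and the root $\mu\in(5,1+\kappa_1(Z_6))$ exists as claimed. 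One small inaccuracy in your closing aside: ``any number $\ge 11$ of pendants already suffices'' is true only if the apex retains weight $21$ (i.e., when arguing via the weighted submatrix of Lemma~\ref{21l} inside $H$, where $d_H(v_1)\ge 21$); for the standalone subgraph $K_1\vee(Z_6\cup pK_1)$ the apex has degree $6+p$, and your own threshold computation then requires $p\ge 15$ (at $p=14$ one gets exactly $h(5)=0$, so only $\kappa_2\ge 5$) -- but this remark is outside the proof and does not affect its validity.
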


\begin{proof}
	Otherwise Lemmas~\ref{21l} and~\ref{26l} would imply $\kappa_2(H)\ge \kappa_2(K_1\vee (Z_6\cup   15K_1))>5$, as $n\ge 22$.
\end{proof}

Now, we prove the theorem.

\medskip\noindent{\em Proof of  Theorem \ref{11t}.} According to the discussion in the beginning of this section, we suppose that $s\geq 1, t\geq 2$. We also set $n\geq 22$ (as in the statement of this theorem) and $l_i\neq 3$. We shall prove that the graph $H$, introduced at the beginning of this section, is isomorphic to $G$.

 Lemma~\ref{l4.1} ensures $d_1^*=n-1$, and we denote the corresponding vertex by $v_1$. By inserting $n_3=n-s-1$, $n_1=s$, $n_2=n_4=0$ and $d_1=d_1^*$ in~\eqref{e51}, we arrive at \begin{align}\label{32e} n_3^*=n-s-1-3n_4^*,\,\, n_2^*=3n_4^*,\, \,\, n_1^*=s-n_4^*\,~\text{and}\,~\triangledown(G)=\triangledown(H)+n_4^*. \end{align}

 We claim that $G$ and $H$ share the same vertex degrees. To achieve this, we need  to verify that $n_4^*=0$, see \eqref{32e}. For a contradiction, suppose that $n_4^*=r\ge 1$, and let $z_1,z_2,\ldots,z_r$ be the $4$-vertices in $H$. From~\eqref{32e}, we deduce \begin{align}\label{33e} n_3^*=n-s-1-3r,~ n_2^*=3r,~ n_1^*=s-r~\,\text{and}\,~ \triangledown(G)=\triangledown(H)+r.
\end{align}
By Lemma \ref{2.6l} and Corollary~\ref{31cl},  every $z_i$ is adjacent to at most one 3-vertex, there is no edge between $z_i$ and $z_j$ ($i\neq j$), and  $N_H(z_i)\cap N_H(z_j)=\{v_1\}$. Together with Lemma~\ref{l4.2}, these structural observations lead to the conclusion that every component of  $H-v_1$ is isomorphic to either $Z_q~(4\le q\le 5)$, or $C_l~(l\ge 3)$, or $P_k~(k\ge 1)$. Using this, we obtain the following conclusions:

\begin{itemize}
	\item[(a)] From $n_4^*=r$, we deduce that there are  $r$ copies of $Z_q$ in $H-v_1$, that is $r_1$ copies of $Z_4$ and $r-r_1$ copies of $Z_5$. They give rise to $4r-r_1$ triangles in $H$.
	
	\item[(b)] Since $n_2^*=3r$, and from (a) we know that there are  $r$ copies of $Z_q~(4\le q\le 5)$ in $H-v_1$, which also means that there are already $3r$ $2$-vertices in these $Z_q$ in $H$. So there must be $k=1$ for all $P_k$ in $H-v_1$. Besides, $n_1^*=s-r$ implies the existence of $s-r$ copies of $P_1$ in $H-v_1$. Of course, they do not contribute any triangle in $H$.
	
	\item[(c)] The remaining components of $H-v_1$ are cycles which produce at least $n-s-1-4r+r_1$ triangles in $H$.
\end{itemize}

From (a), (b) and (c), we deduce  $$\triangledown(H)\ge (4r-r_1)+ (n-s-1-4r+r_1)=n-s-1.$$  On the other hand, we have $\triangledown(G)=n-s-1$. Therefore, $\triangledown(G)\not=\triangledown(H)+r$, which contravenes the last equality of~\eqref{33e}. This proves the claim, and then the only structural possibility is $$H\cong K_1\vee (C_{l'_1}\cup C_{l'_2}\cup\cdots \cup C_{l'_q}\cup sK_1).$$
First, $q=t$ follows because $\mul_H(5)=q-1=t-1=\mul_G(5)$. Secondly, by setting $l_{1}\ge l_{2} \ge\cdots \ge l_t>3$, $l'_{1}\ge l'_{2} \ge\cdots \ge l'_t\ge 3$ and observing that $\max\{3+2\cos\frac{2k\pi}{l_1}\,:\:1\leq k\leq l_1-1\}=3+2\cos\frac{2\pi}{l_1}$, we deduce $l_1=l'_1$. By excluding the $Q$-eigenvalues $3+2\cos\frac{2k\pi}{l_1},$ $1\leq k\leq l_1-1$, from the common $Q$-spectrum given in Lemma~\ref{l3.2}, we find $l_2=l'_2$. By repeating this procedure, we arrive at $l_i=l'_i$, for all $i$, which means that $H$ is isomorphic to $G$.

Finally, by allowing $l_t=3$, we find that $K_1\vee (C_3\cup C_{l_1}\cup C_{l_2}\cup\cdots \cup C_{l_{t-1}}\cup sK_1)$ and   $K_1\vee (K_{1,3}\cup C_{l_1}\cup C_{l_2}\cup\cdots \cup C_{l_{t-1}}\cup (s-1)K_1)$ are $Q$-cospectral, by Corollary \ref{c3.5}. The proof is completed.
\qed

%

\section*{Acknowledgements}
The
research was supported by the National Natural Science Foundation of China [No.~12361070] and the Science Fund of the Republic of Serbia,
grant number 7749676: Spectrally Constrained Signed Graphs with Applications in Coding
Theory and Control Theory -- SCSG-ctct.

\end{document}